\newcommand{\BC}{{\mathbb {C}}} 
\newcommand{\BG}{{\mathbb {G}}}
\newcommand{\CK}{{\mathcal {K}}}
 \newcommand{\CT}{{\mathcal {T}}}
 \newcommand{\fh}{{\mathfrak{h}}}
 \newcommand{\fX}{{\mathfrak{X}}}
\newcommand{\ad}{{\mathrm{ad}}}
 \newcommand{\EP}{{\mathrm{EP}}}
 \newcommand{\GL}{{\mathrm{GL}}}
\newcommand{\Hom}{{\mathrm{Hom}}}
\newcommand{\Rep}{{\mathrm{Rep}}}
\newcommand{\Res}{{\mathrm{Res}}}
\newcommand{\St}{{\mathrm{St}}}
\newcommand{\Vol}{{\mathrm{Vol}}}
\newcommand{\Ext}{{\mathrm{Ext}}}
\newcommand{\matrixx}[4]{\begin{pmatrix}
		#1 & #2 \\ #3 & #4
\end{pmatrix} }        
\newcommand{\wh}[1]{{\widehat {#1}}}
\newcommand{\ov}[1]{{\overline{#1}}}
\newcommand{\sk}{\medskip}
\newcommand{\bs}{\backslash}
\newcommand{\s}{\sk\noindent}
\def\varW@#1#2{%
\vtop{\m@th\ialign{##\cr
\hfil$#1 \mathrm{colim} $\hfil\cr
\noalign{\nointerlineskip\kern1.5\ex@}#2\cr
\noalign{\nointerlineskip\kern-\ex@}\cr}}
}
\def\colim{%
\mathop{\mathpalette\varW@{}}\nmlimits@
}\makeatother
\theoremstyle{plain}
\newtheorem{thm}{Theorem}[section] \newtheorem{cor}[thm]{Corollary}
\newtheorem{lem}[thm]{Lemma}  \newtheorem{prop}[thm]{Proposition}
\theoremstyle{remark} \newtheorem{remark}[thm]{Remark}
\theoremstyle{definition} 
\theoremstyle{definition} \newtheorem{example}[thm]{Example} 
\newtheorem{defn+lem}[thm]{Definition and Lemma}
\numberwithin{equation}{section}
\newcommand*{\sheafhom}{\mathrm{H}\kern -.5pt om}
\begin{document}
\title{Higher Ext-groups in the triple product case}
\date{}

\author{Li Cai}
\address{Academy for Multidisciplinary Studies\\
Beijing National Center for Applied Mathematics\\
Capital Normal University\\
Beijing, 10048, People's Republic of China}
\email{caili@cnu.edu.cn}

\author{Yangyu Fan}
\address{Academy for Multidisciplinary Studies\\
Beijing National Center for Applied Mathematics\\
Capital Normal University\\
Beijing, 10048, People's Republic of China}
\email{b452@cnu.edu.cn; fanyangyu@amss.ac.cn}

\maketitle

\begin{abstract}
 In this short note, we compute higher extension groups for all irreducible representations and deduce the multiplicity formula for finite length representations  in triple product case. 

\end{abstract}

\tableofcontents

\section{Introduction}

Let $F$ be a $p$-adic field. 
Let $L/F$ be a cubic \'etale extension and $D/F$ be a quaternion algebra. Let $G= \Res_{L/F}D^\times$ and $H= D^\times$.
Note that the intersection of the center $Z_G$ of $G$ with 
$H$ is $Z_H=\BG_m$. Denote by $\Rep(F^\times \bs G(F))$ the category of smooth $F^\times \bs G(F)$-representations.

In this short note, we prove that  the higher Ext groups  vanish
\[\Ext^i_{F^\times \bs H(F)}(\pi,\BC) = 0, \quad i > 0.\]
 for any  generic $\pi \in \Rep(F^\times \bs G(F))$.
 Combining with results from
the local trace formula approach, we obtain 
a multiplicity formula for any irreducible $\pi \in \Rep(F^\times \bs G(F))$. 

To state the result,
we  introduce more notations. For any irreducible $\pi \in \Rep(F^\times \bs G(F))$,  consider its geometric multiplicity
\[m_{\mathrm{geo}}(\pi) = \sum_{T \in \CT} |W(H,T)|^{-1} \int_{F^\times\bs T(F)} c_\pi(t) D^H(t) dt\]
where
\begin{itemize}
	\item the support $\CT = \CT(G,H)$ 
		is a set of tori in $ H$. If $D$ is split, $\CT$ consists of $F^\times$ and the nonsplit maximal tori in $H$. 
		If $D$ is non-split, $\CT$ consists of the nonsplit maximal tori in $H$. 
	\item $W(H,T) = N_H(T)/ Z_H(T)$.
	\item $c_\pi$ is the regularized character on the semi-simple locus of $G(F)$ 
		(See \cite[Definition 2.5]{Wan21}).
	\item $D^H(t) = |\det(1-\ad(t))|_{\fh/\fh_t}|$ is the Weyl discriminant on the semi-simple
		locus of $H(F)$.
	\item the measure $dt$ is normalized so that the volume $\Vol(F^\times\bs T(F),dt) = 1$.
\end{itemize}

Via the local trace formula approach, Wan proves the following multiplicity formula
for tempered representations: 
\begin{thm}[Wan \cite{Wan17}, C.2 and C.3] \label{wan}
For any irreducible tempered $\pi \in \Rep(F^\times \bs G(F))$,
\[m(\pi): =\dim_\BC\Hom_{H(F)}(\pi,\BC) = m_{\mathrm{geo}}(\pi).\]
\end{thm}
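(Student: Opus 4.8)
The plan is to establish the identity by means of a local relative trace formula, following the method of Waldspurger and Beuzart-Plessis for the Gan--Gross--Prasad conjectures, adapted to the triple product setting. For $f \in C_c^\infty(F^\times\bs G(F))$ one introduces the distribution
\[ J(f) = \int_{F^\times\bs H(F)}\int_{F^\times\bs H(F)} f(x^{-1}hx)\, dh\, dx, \]
which only makes sense after an Arthur-style truncation of the outer integral (integrate $x$ over an exhausting family of compacta cut out by a parameter, then pass to the limit); write $J$ for the resulting distribution. The core of the argument is to expand $J$ in two different ways and compare.

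\emph{Geometric side.} By analysing the asymptotic behaviour of the truncated integral near the semisimple conjugacy classes of $G(F)$ that meet $H(F)$ --- which, up to conjugacy, are governed exactly by the tori $T\in\CT$, the split/non-split dichotomy for $D$ emerging from which of these classes are ``relevant'' --- one obtains
\[ J(f) = \sum_{T\in\CT} |W(H,T)|^{-1}\int_{F^\times\bs T(F)} c_{\theta_f}(t)\, D^H(t)\, dt, \]
where $\theta_f$ is the quasi-character on $G(F)$ attached to $f$ through the weighted orbital integrals $g\mapsto J_H(g,f)$, and $c_{\theta_f}$ is its regularized value, read off from the Harish-Chandra--Howe germ expansion. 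Making this precise requires Harish-Chandra's theory of orbital integrals, Arthur's $(G,M)$-families, and sharp control of the divergence of $J$; I expect this to be the main obstacle.

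\emph{Spectral side.} Decomposing $f$ by the Plancherel formula on $F^\times\bs G(F)$ and using that $\pi\mapsto m(\pi)$ is bounded and locally constant on the tempered dual --- a consequence of Prasad's multiplicity-at-most-one theorem together with its compatibility with parabolic induction --- one obtains
\[ J(f) = \int_{\mathrm{Temp}(F^\times\bs G(F))} m(\pi)\,\widehat{f}(\pi)\, d\mu_{\mathrm{Pl}}(\pi). \]

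\emph{Conclusion.} Apply both expansions to a suitably truncated matrix coefficient $f=f_\pi$ of the fixed tempered $\pi$: on the spectral side $\widehat{f_\pi}$ is concentrated at $\pi$ with total mass $m(\pi)$, while on the geometric side $\theta_{f_\pi}$ is, up to the weighting, the Harish-Chandra character of $\pi$, so that $c_{\theta_{f_\pi}}=c_\pi$; comparing the two then yields $m(\pi)=m_{\mathrm{geo}}(\pi)$. For tempered $\pi$ that are not discrete series one first settles the discrete series case and then propagates the identity along parabolic induction, checking that both $m(-)$ and $m_{\mathrm{geo}}(-)$ transform compatibly under induction from Levi subgroups of $G$ and $H$.
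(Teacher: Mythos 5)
This theorem is not proved in the paper; it is imported wholesale from Wan's thesis (Appendix C.2--C.3), and the paper only records that Wan's argument proceeds ``via the local trace formula approach.'' Your sketch is a faithful high-level reconstruction of that method: a truncated distribution $J(f)$ with a geometric expansion indexed by the tori in $\CT$ and a spectral expansion over the tempered dual, compared on truncated matrix coefficients and then propagated from the discrete series to all tempered $\pi$ via parabolic induction. So you have correctly identified the strategy behind the citation rather than found an alternative one.

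That said, what you have written is a roadmap, not a proof. Each of the steps you flag --- the Arthur-style truncation and the passage to a well-defined limit, the germ/quasi-character analysis producing the geometric side in the stated form, the integrability and positivity needed to write the spectral side as $\int m(\pi)\widehat{f}(\pi)\,d\mu_{\mathrm{Pl}}(\pi)$, and the compatibility of both sides with induction from Levis --- is a substantial theorem in its own right (Wan's appendix devotes many pages to exactly these points), and you yourself call the first ``the main obstacle.'' For the purposes of this paper, none of this is reproved: the theorem is taken as a black box and the paper's actual contribution (Theorem \ref{main}) is to upgrade $m(\pi)$ to $\mathrm{EP}_{F^\times\bs H(F)}(\pi,\BC)$ by showing the vanishing of higher $\Ext$-groups and then extending from tempered to finite-length representations by additivity and constancy in unramified twisting families. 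Your proposal addresses the input, not the paper's argument.
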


\begin{remark}
    In fact, the above multiplicity formula holds for generic representations combining  the result of Prasad \cite{Pra90, Pra92}
    (See \cite[Remark C. 2.3]{Wan17}).
\end{remark}

In general, it is conjectured in \cite[Conjecture 7.1]{Pra18} (for the Gan-Gross-Prasad models) 
and \cite[Conjecture 7.6]{Wan21} (for all spherical pairs) that
the multiplicity formula should hold for all   $\pi\in\Rep(F^\times\bs G(F))$ of finite length
with the multiplicity $m(\pi)$ replaced by the Euler-Poincar\'e number
\[\EP_{F^\times \bs H(F)}(\pi,\BC):=\sum_{i\geq0}(-1)^i\dim_\BC\Ext^i_{F^\times \bs H(F)}(\pi,\BC).\]
Philosophically, such multiplicity formula can be viewed as a kind of Riemann-Roch theorem (See \cite[Remark 7.2]{Pra18}).

\begin{example}
Assume $D$ is split and $L = E \oplus F$ where $E/F$ is a quadratic field extension. Let $\pi = \pi_1 \boxtimes \pi_2 \in \Rep(F^\times \bs G(F))$ with
\begin{itemize}
    \item $\pi_1=I_{B(E)}^{\GL_2(E)} \chi_1\boxtimes\chi_2\in\Rep(F^\times\bs\GL_2(E))$ being the normalized parabolic induction for characters $\chi_1,\chi_2:\ E^\times \to \BC^\times$ such that $\chi_1\chi_2|_{F^\times} = 1$;
    \item $\pi_2=\BC\in\Rep(F^\times\bs \GL_2(F))$ being the trivial representation.
\end{itemize} 
Then 
\[\Hom_{F^\times\bs H(F)}(\pi,\BC) = \Hom_{F^\times\bs \GL_2(F)}(\pi_1,\BC).\]
Set $\chi' := \chi_1\ov{\chi_2}$ where $-$ denotes the Galois conjugation with respect to
$E/F$. It is known that (see e.g. \cite[Theorem 5.2]{Mat11}) $m(\pi) \leq 1$ and the equality holds if and only if
$\chi_1|_{F^\times} = \chi_2|_{F^\times} = 1$ or $\chi' = 1$. 

On the other hand, by  a property of the regularized characters of  parabolic inductions \cite[Proposition 2.7]{Wan21}
\[m_\mathrm{geo}(\pi) = \frac{1}{2} \int_{F^\times \bs E^\times}  c_\pi(t) D^H(t) dt = 
\frac{1}{2} \int_{F^\times \bs E^\times}  (\chi'(t) + \chi'(\bar{t}))dt.\]
In particular, $m_\mathrm{geo}(\pi) \leq 1$ and the equality holds if and only if $\chi' = 1$. 

Therefore, in the case $\chi' \not= 1$ and $\chi_1|_{F^\times} = \chi_2|_{F^\times} = 1$, 
\[m(\pi) \not= m_\mathrm{geo}(\pi).\]
This is compatible with  Theorem \ref{wan} since $\pi_2$ (hence $\pi$) is neither generic nor tempered.
\end{example}

The following is the main result of this paper.
\begin{thm}\label{main}
   For any irreducible $\pi\in\Rep(F^\times \bs G(F))$,  generic if $D$ is split,  
    \[\Ext^i_{F^\times \bs H(F)}(\pi,\BC) = 0, \quad i\geq 1.\]
    Moreover, for any $\pi \in \Rep(F^\times \bs G(F))$ of finite
    length,  the multiplicity formula holds
   \[\mathrm{EP}_{F^\times \bs H(F)}(\pi,\BC) = m_\mathrm{geo}(\pi).\]
\end{thm}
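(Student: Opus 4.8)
The plan is to split Theorem~\ref{main} into the two assertions and treat them in sequence, the first being the technical heart. For the Ext-vanishing, I would proceed in two stages according to whether $D$ is split. When $D$ is non-split, $H(F)/Z_H(F)$ is compact, so the category of smooth $F^\times\bs H(F)$-representations is semisimple; every object is a direct sum of irreducibles, $\Ext^i$ vanishes for $i\geq 1$ trivially, and the claim is immediate. The substantive case is $D$ split, i.e.\ $H(F)=\GL_2(F)$ and $\pi$ generic. Here I would invoke the strategy that has become standard for such ``higher Gan--Gross--Prasad'' statements: reduce to the case where $\pi$ is a generic \emph{irreducible} representation (by d\'evissage along a Jordan--H\"older filtration, using the long exact sequence of $\Ext$), then classify: either $\pi$ is supercuspidal (as a representation of $G(F)=\GL_2(L)$ or its subgroup structure coming from $L$), or it is a subquotient of a parabolic induction. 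For the induced case, Frobenius reciprocity (Bernstein's second adjunction / the geometric lemma for the pair $(G,H)$) expresses $\Ext^\bullet_{H}(\pi,\BC)$ in terms of Ext-groups over smaller groups (tori, and $\GL_1$-type factors), where an inductive hypothesis or a direct Shapiro-type computation applies. For the supercuspidal case one exploits projectivity/injectivity: a supercuspidal $\pi$ is both projective and injective in its Bernstein block, and restricting to $H(F)$ one shows $\pi|_{H(F)}$ remains acyclic for $\Hom_{H(F)}(-,\BC)$ by a Kirillov-model argument — the restriction to a mirabolic subgroup is (a sum of) compactly induced representations, for which higher Ext vanishes by an explicit resolution. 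I expect the bookkeeping in the parabolically-induced case — matching up the geometric-lemma strata with the tori in $\CT$ and checking the relevant higher Ext over those tori vanish — to be the main obstacle, and the cleanest route is probably to cite or adapt the Euler--Poincar\'e formalism of Prasad together with Bernstein's results on the homological properties of the category $\Rep(\GL_2)$.

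Granting the vanishing, the multiplicity formula follows by a short d\'evissage. The point is that for \emph{generic irreducible} $\pi$, the vanishing gives $\mathrm{EP}_{F^\times\bs H(F)}(\pi,\BC)=\dim_\BC\Hom_{H(F)}(\pi,\BC)=m(\pi)$, and Theorem~\ref{wan} (extended to generic representations via the remark after it, using Prasad~\cite{Pra90,Pra92}) identifies $m(\pi)=m_\mathrm{geo}(\pi)$. So the formula holds on all \emph{generic} irreducibles. Next I would extend it to \emph{all} irreducibles: a general irreducible $\pi$ of $G(F)$ is a Langlands quotient, hence a quotient of a standard module parabolically induced from an essentially tempered (in particular generic, after twisting) representation of a Levi; using the other members of the corresponding induced-module exact sequences, together with the fact that $\mathrm{EP}$ is additive in short exact sequences and that both $\mathrm{EP}_{F^\times\bs H(F)}(-,\BC)$ and $m_\mathrm{geo}(-)$ are additive on Grothendieck groups (for $m_\mathrm{geo}$ this is the character-theoretic additivity of $c_\pi$), one reduces the identity $\mathrm{EP}=m_\mathrm{geo}$ for arbitrary irreducible $\pi$ to the already-established generic case. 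Finally, additivity of both sides in short exact sequences upgrades the formula from irreducibles to all finite-length $\pi$: filter $\pi$ by irreducible subquotients and sum.

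The one subtlety I would flag in the last step is that $m_\mathrm{geo}$ is a priori defined for irreducible $\pi$ via its regularized character, but by \cite[Definition 2.5]{Wan21} and \cite[Proposition 2.7]{Wan21} the regularized character extends linearly to the Grothendieck group of finite-length representations and behaves well under parabolic induction; this is exactly what makes the d\'evissage go through, and it is also what makes the Euler--Poincar\'e version of the conjecture (rather than the naive $\Hom$-dimension version) the correct statement, as the Example in the introduction illustrates. I would therefore organize the write-up as: (i) the compact case; (ii) the split generic irreducible case via homological algebra in $\Rep(\GL_2)$; (iii) additivity of $\mathrm{EP}$ and $m_\mathrm{geo}$; (iv) Langlands-quotient d\'evissage to all irreducibles; (v) Jordan--H\"older d\'evissage to finite length. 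The bulk of the work, and the only place new input beyond \cite{Wan17,Wan21,Pra90,Pra92} is needed, is step (ii).
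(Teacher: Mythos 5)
Your treatment of the Ext-vanishing (the non-split case via compactness of $Z_H\bs H$, and the split case via geometric lemma plus homological algebra in $\Rep(\GL_2)$) follows the paper's approach in spirit, though it stays at a high level; the paper's concrete workhorse is the Schneider--Stuhler duality of Nori--Prasad (Theorem~\ref{SSNP}), which turns the degree-one Ext computation into a $\Hom$ computation for the Aubert--Zelevinsky dual, and this is worth naming explicitly since it is not obvious how to finish the supercuspidal-on-torus and parabolically-induced strata without it.

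The second half of your argument, the passage from generic irreducibles to all finite-length representations, has a genuine gap. You propose a Langlands-quotient d\'evissage: write an arbitrary irreducible $\pi$ as the quotient of a standard module $I=I_P^G(\sigma\otimes\chi)$ and use additivity of $\mathrm{EP}$ and $m_\mathrm{geo}$ to ``reduce to the already-established generic case.'' But what has been established is the identity $\mathrm{EP}=m_\mathrm{geo}$ for generic \emph{irreducible} (in fact, for tempered) $\pi$. The standard module $I$ is generic as a $G(F)$-representation, yet it is generally reducible, and its Jordan--H\"older constituents include precisely the non-generic $\pi$ you are trying to handle. Consequently $\mathrm{EP}(I)$ is \emph{not} computable from the irreducible generic case alone: one does not have $\Ext^i_{F^\times\bs H(F)}(I,\BC)=0$ for $i\geq 1$ (that would require all subquotients of $I$ to be generic), and the additivity relation $\mathrm{EP}(I)=\mathrm{EP}(\pi)+\sum\mathrm{EP}(\pi_i)$ is exactly the equation you want to solve for $\mathrm{EP}(\pi)$, which requires an independent determination of $\mathrm{EP}(I)$. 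As written, the d\'evissage is circular.

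The missing ingredient is precisely the constancy of both sides along unramified twisting families, which is what the paper supplies: $m_\mathrm{geo}(I_P^G(\sigma\otimes\chi))$ is shown to be independent of $\chi$ using the parabolic-induction formula for regularized characters and anisotropy of the tori in the support $\CT$ (Lemma~\ref{lem-constancy}), while the constancy of $\mathrm{EP}(I_P^G(\sigma\otimes\chi),\BC)$ in $\chi$ is a nontrivial theorem of Aizenbud--Sayag \cite[Theorem E(4)]{AS20} (see also \cite[Proposition 3.18]{CF21}). Combined with the fact that such standard modules generate the Grothendieck group of finite-length representations, this pins down $\mathrm{EP}(I)=m_\mathrm{geo}(I)$ for all standard $I$ by deforming to the tempered point $\chi=1$, and additivity then finishes the proof. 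Your outline would be rescued either by invoking this constancy, or by computing $\mathrm{EP}(I)$ and $m_\mathrm{geo}(I)$ for non-tempered $I$ directly via the geometric lemma and \cite[Proposition 2.7]{Wan21} (which the paper's Propositions~\ref{case I}--\ref{case III} in fact do, though the paper chose not to route the final argument through them).
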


\begin{remark}  Previously,
\begin{itemize}
\item The vanishing result is known for the Whittaker model: if $G$ is a  connected quasi-split reductive
       group over $F$, $B = TN$ a Borel subgroup of $G$, $\psi$ a generic character on $N(F)$, then for
       any irreducible representation $\pi\in \Rep(G(F))$ and any $i \geq 1$,
       by \cite[Proposition 2.8]{Pra18}
       \[\Ext_{N(F)}^i(\pi,\psi^{-1}) \cong \Ext_{G(F)}^i\left(i_{N(F)}^{G(F)} \psi, \pi^\vee\right) \cong \Ext_1^i(\pi_{N,\psi},\BC) = 0.\]      
       For the Gan-Gan-Prasad models of general linear groups, the vanishing result is
		due to Chan-Savin \cite{CS}.
\item  The only known cases of \cite[Conjecture 7.6]{Wan21}
	are the group case \cite[Proposition 2.1(4)]{Pra18}, 
	the Whittaker models \cite[Section 8.1]{Wan21}, 
	and the Gan-Gross-Prasad models for general linear groups (\cite[Theorem 4.2]{Pra18}
	and note that in this case the support $\CT = \CT(G,H)$ of $m_\mathrm{geo}$ is $\{1\}$).

\end{itemize}
\end{remark}

We explain the proof of Theorem \ref{main}. 
In fact, it is known that $\Ext^i_{F^\times\bs H(F)}(\pi,\BC) = 0$
for any $\pi$ when $i \geq 2$ (see \cite[Proposition 2.9]{Pra18},  also Proposition  \ref{Ext}(1) below) and $\Ext^1_{F^\times\bs H(F)}(\pi,\BC) = 0$ if $\pi$ is supercuspidal (see \cite[Theorem 5.4.1]{Cas95},  also
Proposition \ref{Ext}(1) below).  The proof of the vanishing result
is reduced to showing that $\Ext^1_{F^\times\bs H(F)}(\pi,\BC) = 0$
for non-supercuspidal $\pi$. For this,
we apply standard tools: the geometric lemma of Bernstein-Zelevinsky
and the Schneider-Stuhler duality
(\cite[Theorem 2]{NP20}, see also Theorem \ref{SSNP} below). 
Once  the vanishing results are available, 
the multiplicity formula for finite length representations 
is deduced from the tempered version (Theorem \ref{wan}) 
by noting both sides are additive and constant in an unramified twisting family.

In fact, we compute  $\Ext^i_{F^\times \bs H(F)}(\pi,\BC)$ 
for all irreducible $\pi\in\Rep(F^\times \bs G(F))$  and all $i$. 
The results for $L=E\oplus F$ together with the Schneider-Stuhler duality  implies  the following complete classification of irreducible
$\GL_2(F)$-subrepresentations  of irreducible $\GL_2(E)$-representations, 
which is analogous to \cite[Proposition 9.1]{NP20} for the case $L=F\oplus F \oplus F$ (in fact, there is an extra central character condition in \cite[Proposition 9.1]{NP20}:
$\pi = \pi_1 \boxtimes \pi_2 \boxtimes \pi_3$ with $\pi_3$ having trivial central character. This condition is dropped here).

\begin{prop}\label{Strongform}Assume $L=E\oplus F$ and $D$ is split. 
For $\pi=\pi_1\boxtimes\pi_2\in\Rep(F^\times\bs G(F))$  irreducible,  
$\pi_2^\vee$ is a $\GL_2(F)$-subrepresentation of $\pi_1$ if and only if 
\begin{itemize}
\item  $\pi_1=\xi\circ\det$ and $\pi_2=\xi^{-1}|_{F^\times}\circ\det$; or
    \item $\pi_2^\vee$ is supercuspidal, and appears as a quotient of $\pi_1|_{\GL_2(F)}$; or 
    \item $\pi_2=\St_F\otimes\xi$, $\pi_1=I_{B(E)}^{\GL_2(E)}\chi_1\boxtimes\chi_2$ 
such that $\xi\chi_1|_{F^\times}=\xi\chi_2|_{F^\times}=1$ and $\xi_E\chi^\prime\neq1$. 
Here $\xi_E=\xi\circ N_{E/F}$,  $\chi' = \chi_1\ov{\chi_2}$ and $\St_F$ is the Steinberg representation for $\GL_2(F)$.   
\end{itemize}
 \end{prop}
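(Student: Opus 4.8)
Write $G(F)=\GL_2(E)\times\GL_2(F)$ and identify $H(F)=\GL_2(F)$ with the diagonal, so that $Z_G(F)\cap H(F)=F^\times$. Membership $\pi=\pi_1\boxtimes\pi_2\in\Rep(F^\times\bs G(F))$ amounts to $\omega_{\pi_1}\omega_{\pi_2}=1$ on $F^\times$, which is exactly the condition that $\pi_2^\vee$ and $\pi_1|_{\GL_2(F)}$ have the same central character on $F^\times$; since $\pi_2^\vee$ is irreducible, ``$\pi_2^\vee$ is a $\GL_2(F)$-subrepresentation of $\pi_1$'' means $\Hom_{F^\times\bs H(F)}(\pi_2^\vee,\pi_1)\neq0$, i.e.\ $\pi_2^\vee$ sits in the socle of $\pi_1|_{\GL_2(F)}$. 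First I would dispose of two of the three families for soft reasons. The companion statement ``$\pi_2^\vee$ is a quotient of $\pi_1|_{\GL_2(F)}$'' is $\Hom_{F^\times\bs H(F)}(\pi,\BC)\neq0$, i.e.\ $m(\pi)\neq0$, which is already known (Theorem \ref{wan}, and \cite[Theorem 5.2]{Mat11} in the present $L=E\oplus F$ setting); for supercuspidal $\pi_2^\vee$ one uses that supercuspidal representations are both projective and injective in $\Rep(\GL_2(F))$, so that ``subrepresentation'', ``direct summand'' and ``quotient'' of $\pi_1|_{\GL_2(F)}$ all coincide, which yields the second family. Finally $\pi_1=\xi\circ\det$, $\pi_2=\xi^{-1}|_{F^\times}\circ\det$ makes $\pi_2^\vee=\pi_1|_{\GL_2(F)}$ as one-dimensional representations, the first family.

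The real content is the remaining ``principal series times Steinberg'' family, where the socle of $\pi_1|_{\GL_2(F)}$ and its head genuinely differ. The plan is to read this off from the computation of $\Ext^\bullet_{F^\times\bs H(F)}(\tau,\BC)$ for all irreducible $\tau\in\Rep(F^\times\bs G(F))$ carried out in the proof of Theorem \ref{main} — which itself proceeds by applying the Bernstein--Zelevinsky geometric lemma to the two $\GL_2(F)$-orbits on $\BP^1(E)$ to filter $\tau_1|_{\GL_2(F)}$, and then invoking the Schneider--Stuhler duality of Theorem \ref{SSNP}, the relevant cohomological dimension being $1$ by Proposition \ref{Ext}(1). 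Since Schneider--Stuhler duality exchanges socle and head up to the Aubert--Zelevinsky involution $\mathbb{D}_G$ on $G(F)$ and an unramified twist, the same input identifies each socle space $\Hom_{F^\times\bs H(F)}(\tau_2^\vee,\tau_1)$: namely $\pi_2^\vee\hookrightarrow\pi_1|_{\GL_2(F)}$ holds precisely when the corresponding $\Ext$-group attached to $\mathbb{D}_G\pi=\mathbb{D}\pi_1\boxtimes\mathbb{D}\pi_2$ is nonzero. As $\mathbb{D}$ swaps $\xi\circ\det$ with $\St\otimes\xi$ and fixes irreducible principal series and supercuspidals, the locus this singles out is exactly $\pi_2=\St_F\otimes\xi$ with $\pi_1=I_{B(E)}^{\GL_2(E)}\chi_1\boxtimes\chi_2$, and reading off the explicit $\Ext$-conditions gives $\xi\chi_1|_{F^\times}=\xi\chi_2|_{F^\times}=1$ and $\xi_E\chi'\neq1$; one cross-checks against the formula for $m(\pi)$ in the introductory example that when $\xi_E\chi'=1$ the twisted Steinberg occurs at most as a quotient, not in the socle, so it is rightly absent. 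Because the entire argument takes place in $\Rep(F^\times\bs G(F))$ and $\Rep(F^\times\bs H(F))$, the general central character is incorporated from the outset, which is why the extra hypothesis of \cite[Proposition 9.1]{NP20} is not needed here.

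I expect the main obstacle to be exactly this last piece of bookkeeping: making the Schneider--Stuhler dictionary fully precise for the pair $(F^\times\bs G(F),F^\times\bs H(F))$ — in particular identifying the unramified twist and computing $\mathbb{D}$ on the reducible boundary cases — and then checking that the conditions produced by the geometric-lemma computation of the $\Ext$-groups reorganize cleanly into the stated trichotomy, i.e.\ that the socle of $I_{B(E)}^{\GL_2(E)}\chi_1\boxtimes\chi_2$ restricted to $\GL_2(F)$ contains a twisted Steinberg under precisely the stated constraints and contains nothing else new.
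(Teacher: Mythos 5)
Your high-level strategy coincides with the paper's: convert ``$\pi_2^\vee\hookrightarrow\pi_1|_{\GL_2(F)}$'' into an $\Ext^1$-group via the Schneider--Stuhler duality of Theorem~\ref{SSNP} and then read the answer off Proposition~\ref{case II}. But the precise form of the duality you write down is wrong in a way that matters. Theorem~\ref{SSNP} is applied with $\Gamma = H = \GL_2(F)$ and with $\pi=\pi_2^\vee$ the irreducible $H$-representation; for $\pi_2$ non-supercuspidal (so $d'(\pi_2^\vee)=1$) it gives
\[
\dim_\BC\Hom_{H(F)}(\pi_2^\vee,\pi_1)
= \dim_\BC\Ext^1_{\Rep(H(F),\omega)}\bigl(\pi_1,\,D(\pi_2^\vee)\bigr)
= \dim_\BC\Ext^1_{F^\times\bs H(F)}\bigl(\pi_1\otimes D(\pi_2),\,\BC\bigr),
\]
so \emph{only the $H$-factor $\pi_2$ is hit by the Aubert--Zelevinsky involution}; $\pi_1$ is left untouched, and there is no unramified twist. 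Your formulation ``the $\Ext$-group attached to $\mathbb{D}_G\pi=\mathbb{D}\pi_1\boxtimes\mathbb{D}\pi_2$'', which involutes $\pi_1$ as well, gives the wrong answer exactly in the first family: take $\pi_1=\xi\circ\det$, $\pi_2=\xi^{-1}|_{F^\times}\circ\det$. Then $D\pi_1=\St_E\otimes\xi$ and $D\pi_2=\St_F\otimes\xi^{-1}|_{F^\times}$, and Proposition~\ref{case II} says $\Ext^1_{F^\times\bs H(F)}(D\pi_1\otimes D\pi_2,\BC)=0$ because $D\pi_1$ is a special series, neither one-dimensional nor an irreducible principal series. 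This contradicts your own direct ``soft'' observation that $\pi_2^\vee=\pi_1|_{\GL_2(F)}$ here, a red flag that the formula has to keep $\pi_1$ fixed. With the correct formula, the second bullet of Proposition~\ref{case II} recovers exactly this family.

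A secondary gap: the ``soft reasons'' dispatch of the one-dimensional case only proves the ``if'' direction of the first bullet. To show that no one-dimensional $\pi_2$ with $\pi_1$ of higher dimension produces a subrepresentation, you still need the $\Ext$-computation (with $D\pi_2 = \St_F\otimes\xi$ now the Steinberg factor, Proposition~\ref{case II} forces $\pi_1$ to be one-dimensional). The supercuspidal case is handled correctly, with the one caveat that projectivity/injectivity of supercuspidals holds in $\Rep(\GL_2(F),\omega)$ rather than in all of $\Rep(\GL_2(F))$, which suffices since the central character of $\pi_1|_{\GL_2(F)}$ is fixed.
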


\s{\bf Acknowledgement} The debt this work owes to \cite{Pra18} and \cite{NP20} is clear. We thank Professor Ye Tian for his consistent encouragement. We want to thank Professor Kei-Yuen Chan for helpful discussions. 


\section{The proof}
 For any reductive group $\Gamma$ over $F$ and any center character $\omega:\ Z_\Gamma(F)\to\BC^\times$, let $\Rep(\Gamma(F),\omega)$ denote the full subcategory of $\Rep(\Gamma(F))$ consisting of objects on which $Z_\Gamma(F)$ acts by $\omega$. For any objects $\pi,\pi^\prime\in\Rep(\Gamma(F),\omega)\subset \Rep(\Gamma(F))$, let $\Ext^i_{\Rep(\Gamma(F),\omega)}(\pi,\pi^\prime)$ (resp. $\Ext^i_{\Gamma(F)}(\pi,\pi^\prime)$) be the $i$-th extension group in the category $\Rep(\Gamma(F),\omega)$ (resp. $\Rep(\Gamma(F))$). 
 Let  $\Gamma^\prime(F)$ be  a closed subgroup of the $p$-adic group $\Gamma(F)$ with  modulus character $\delta_{\Gamma^\prime}$. Note that   when  $\Gamma^\prime$ is reductive, $\delta_{\Gamma^\prime}$ is trivial. Denote by $I_{\Gamma^\prime(F)}^{\Gamma(F)}$ (resp. $i_{\Gamma^\prime(F)}^{\Gamma(F)}$) the normalized (resp. compact) induction.  Note that for any $\sigma\in \Rep(\Gamma^\prime(F))$, $(i_{\Gamma^\prime(F)}^{\Gamma(F)}\sigma)^\vee=I_{\Gamma^\prime(F)}^{\Gamma(F)}\sigma^\vee$
where $-^\vee$ stands for smooth dual in the proper category. Moreover if $\Gamma^\prime=MN$ is a parabolic subgroup with Levi factor $M$,  denote by  $J_N$   the normalized Jacquet functor  for $\Gamma^\prime$.

We record some basic properties of Ext-groups which are frequently used in the following.
 \begin{prop}\label{Ext} 
 We have the following results for the Ext-groups.
 \begin{enumerate}
 \item For any irreducible $\pi\in \Rep(\Gamma(F))$,  smooth $\pi^\prime\in \Rep(\Gamma(F))$ and any $i > \mathrm{split\ rank\ of}\ Z_\Gamma$
  $$\Ext^i_{\Gamma(F)}(\pi,\pi^\prime) \cong \Ext^i_{\Gamma(F)}(\pi^\prime,\pi) = 0.$$
  If $\pi$ is supercuspidal with central character $\omega$, then $\pi$  is both projective and injective in $\Rep(\Gamma(F),\omega)$. In particular,
  for any $i \geq 1$
  $$\Ext^i_{\Rep(\Gamma(F),\omega)}(\pi,\pi^\prime) \cong \Ext^i_{\Rep(\Gamma(F),\omega)}(\pi^\prime,\pi) = 0.$$
 \item For any $\pi\in \Rep(\Gamma(F),\omega)$, $\sigma\in \Rep(\Gamma(F),\omega^{-1})$ and any $i \geq 0$
  $$\Ext^i_{Z_\Gamma(F)\bs \Gamma(F)}(\pi\otimes\sigma,\BC) \cong \Ext^i_{\Rep(\Gamma(F),\omega)}(\pi,\sigma^\vee)\cong \Ext^i_{\Rep(\Gamma(F),\omega^{-1})}(\sigma,\pi^\vee).$$
 \item  The restriction functor 
 $\Rep(\Gamma(F))\to \Rep(\Gamma^\prime(F))$
 sends projective objects to projective objects, the normalized induction functor $I_{\Gamma^\prime(F)}^{\Gamma(F)}:\ \Rep(\Gamma^\prime(F))\to \Rep(\Gamma(F))$ sends injective objects to injective objects.  Moreover, for any $\sigma\in \Rep(\Gamma^\prime(F))$, $\pi\in \Rep(\Gamma(F))$
 and any $i \geq 0$
 $$\Ext^i_{\Gamma(F)}\left(\pi, I_{\Gamma^\prime(F)}^{\Gamma(F)}\sigma\right) \cong \Ext^i_{\Gamma^\prime(F)}\left(\pi, \sigma\otimes\delta_{\Gamma^\prime}^{1/2}\right).$$ 
 When $\Gamma^\prime=MN$ is a parabolic subgroup with Levi factor $M$,  the normalized Jacquet functor $J_N:\ \Rep(\Gamma(F))\to \Rep(M(F))$ sends projective objects to projective objects. For any $\sigma\in \Rep(M(F))$ and any $i \geq 0$
  $$\Ext^i_{\Gamma(F)}\left(\pi,I_{\Gamma^\prime(F)}^{\Gamma(F)}\sigma\right) \cong  \Ext^i_{M(F)}(J_N(\pi),\sigma).$$
 \end{enumerate}
 \end{prop}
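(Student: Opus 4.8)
Proposition \ref{Ext} collects standard homological facts about smooth representations of $p$-adic groups, so the plan is to reduce each assertion to a known result of Bernstein--Zelevinsky, Casselman or Prasad together with routine (co)homological bookkeeping, not to prove anything new. For part (1): an irreducible $\pi\in\Rep(\Gamma(F))$ is admissible and has a central character $\omega$, and the vanishing of $\Ext^i_{\Gamma(F)}(\pi,\pi')$ and of $\Ext^i_{\Gamma(F)}(\pi',\pi)$ for $i$ exceeding the split rank of $Z_\Gamma$ is exactly \cite[Proposition 2.9]{Pra18}: after fixing a central character the category has finite cohomological dimension governed by that rank, and admissibility of $\pi$ (equivalently of $\pi^\vee$) lets one put it into either slot. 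If moreover $\pi$ is supercuspidal, then by \cite[Theorem 5.4.1]{Cas95} the Bernstein component of $\pi$ splits off $\Rep(\Gamma(F),\omega)$ as a direct factor equivalent to $\BC$-vector spaces; in particular $\pi$ is projective and injective there, so all higher Ext-groups into or out of $\pi$ in that category vanish.

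\textbf{Part (2).} For $i=0$, a $\Gamma(F)$-map $\pi\otimes\sigma\to\BC$ is the same datum as a $\Gamma(F)$-invariant bilinear pairing, i.e.\ an element of $\Hom_{\Gamma(F)}(\pi,\sigma^\vee)$, and it descends to $Z_\Gamma(F)\bs\Gamma(F)$ because $\pi\otimes\sigma$ has trivial central character. For $i>0$ I would take a projective resolution $P_\bullet\to\pi$ in $\Rep(\Gamma(F),\omega)$; tensoring over $\BC$ with $\sigma$ is exact, and each $P_j$ is a direct summand of a direct sum of modules $\cind_{\Omega}^{\Gamma(F)}\tau$ with $\Omega\subseteq\Gamma(F)$ open, compact modulo $Z_\Gamma(F)$, and $\tau$ smooth of central character $\omega$. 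Then $(\cind_{\Omega}^{\Gamma(F)}\tau)\otimes\sigma\cong\cind_{\Omega}^{\Gamma(F)}(\tau\otimes\sigma)$ has trivial central character, hence is compactly induced from the genuinely compact open subgroup $\Omega/Z_\Gamma(F)$ of $Z_\Gamma(F)\bs\Gamma(F)$, hence is projective there. So $P_\bullet\otimes\sigma\to\pi\otimes\sigma$ is a projective resolution in $\Rep(Z_\Gamma(F)\bs\Gamma(F))$, and $\Hom_{Z_\Gamma(F)\bs\Gamma(F)}(P_\bullet\otimes\sigma,\BC)=\Hom_{\Gamma(F)}(P_\bullet,\sigma^\vee)$ computes both sides; exchanging the roles of $\pi$ and $\sigma$ gives the third identification.

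\textbf{Part (3).} Each isomorphism is a derived adjunction. The normalized induction $I:=I_{\Gamma'(F)}^{\Gamma(F)}$ is exact, and Frobenius reciprocity $\Hom_{\Gamma(F)}(\rho,I\sigma)\cong\Hom_{\Gamma'(F)}(\rho|_{\Gamma'(F)}\otimes\delta_{\Gamma'}^{-1/2},\sigma)$ presents it as right adjoint to the exact functor $\rho\mapsto\rho|_{\Gamma'(F)}\otimes\delta_{\Gamma'}^{-1/2}$, so $I$ preserves injectives; feeding an injective resolution of $\sigma$ through the adjunction gives $\Ext^i_{\Gamma(F)}(\pi,I\sigma)\cong\Ext^i_{\Gamma'(F)}(\pi,\sigma\otimes\delta_{\Gamma'}^{1/2})$. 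Restriction preserves projectives by Mackey's formula: $(\cind_K^{\Gamma(F)}W)|_{\Gamma'(F)}$ decomposes as a direct sum over $K\bs\Gamma(F)/\Gamma'(F)$ of modules compactly induced to $\Gamma'(F)$ from the compact open subgroups $\Gamma'(F)\cap g^{-1}Kg$, each of which is projective, and every projective of $\Rep(\Gamma(F))$ is a summand of a sum of such $\cind_K^{\Gamma(F)}W$. Finally, for a parabolic $\Gamma'=MN$ the normalized Jacquet functor $J_N$ is exact and, by the normalized Frobenius reciprocity $\Hom_{\Gamma(F)}(\rho,I_{\Gamma'(F)}^{\Gamma(F)}\sigma)\cong\Hom_{M(F)}(J_N\rho,\sigma)$, is left adjoint to the exact functor $I_{\Gamma'(F)}^{\Gamma(F)}$, hence preserves projectives; applying $J_N$ to a projective resolution of $\pi$, still a resolution since $J_N$ is exact, yields $\Ext^i_{\Gamma(F)}(\pi,I_{\Gamma'(F)}^{\Gamma(F)}\sigma)\cong\Ext^i_{M(F)}(J_N\pi,\sigma)$.

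\textbf{Main obstacle.} None of this is deep: once one grants (i) finite cohomological dimension of $\Rep(\Gamma(F),\omega)$ with the stated bound and (ii) projectivity and injectivity of supercuspidals in their Bernstein block, the rest is adjoint-functor formalism --- exactness of $I$, $J_N$, restriction and $-\otimes\sigma$, and tracking where (co)induced projectives and injectives land. I would simply cite \cite{Pra18} and \cite{Cas95} for (i) and (ii) rather than reprove them; that importation is the only non-formal ingredient.
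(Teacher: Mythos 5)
Your proposal is correct and takes essentially the same approach as the paper: the paper's entire proof is the two citations you use (Casselman, Theorem 5.4.1, for projectivity/injectivity of supercuspidals; Prasad, Section 2, for everything else), and the derived-adjunction and Mackey bookkeeping you add on top of those citations is standard and sound.
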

\begin{proof}The property of supercuspidal representations can be found in \cite[Theorem 5.4.1]{Cas95}. Other results are summarized in   \cite[Section 2]{Pra18}.
    \end{proof}

For any $\pi\in\Rep(\Gamma(F),\omega)$ irreducible, let $d(\pi)$ (resp. $d^\prime(\pi)$) be  the split rank of $Z_M$ (resp. $Z_M\cap[\Gamma,\Gamma]$), where $M$ is any Levi subgroup carrying the cuspidal support of $\pi$. The key ingredient for computing higher Ext groups is the following Schneider-Stuhler duality theorem.
 \begin{thm}[Theorem 1,2 in \cite{NP20}] \label{SSNP} 
 Let $\pi\in\Rep(\Gamma(F),\omega)$ be any irreducible representation and $D(\pi)$ be the Aubert-Zelevinsky involution of $\pi$.  Then 
 \begin{itemize}
     \item for any $\pi^\prime\in\Rep(\Gamma(F))$, $\Ext^i_{\Gamma(F)}(\pi,\pi^\prime) = 0$ for $i > d(\pi)$ and for $0 \leq i \leq d(\pi)$, there is a non-degenerate pairing 
 $$\Ext^i_{\Gamma(F)}(\pi,\pi^\prime) \times \Ext^{d(\pi)-i}_{\Gamma(F)}(\pi^\prime, D(\pi))\to \Ext_{\Gamma(F)}^{d(\pi)}(\pi,D(\pi)) \cong \BC,$$
 \item for any $\pi^\prime\in\Rep(\Gamma(F),\omega)$, $\Ext^i_{\Rep(\Gamma(F),\omega)}(\pi,\pi^\prime) = 0$ for $i > d^\prime(\pi)$ and for $0 \leq i \leq d^\prime(\pi)$, there is a non-degenerate pairing 
 $$\Ext^i_{\Rep(\Gamma(F),\omega)}(\pi,\pi^\prime)\times \Ext^{d^\prime(\pi)-i}_{\Rep(\Gamma(F),\omega)}(\pi^\prime, D(\pi))\to \Ext_{\Rep(\Gamma(F),\omega)}^{d^\prime(\pi)}(\pi,D(\pi)) \cong \BC .$$
 \end{itemize}
  \end{thm}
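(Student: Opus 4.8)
\noindent The plan is to establish the first assertion (duality in $\Rep(\Gamma(F))$); the second (in $\Rep(\Gamma(F),\omega)$) is entirely parallel, with the split rank $d(\pi)$ of $Z_M$ replaced by the split rank $d'(\pi)$ of $Z_M\cap[\Gamma,\Gamma]$ and with $Z_M(F)$ replaced by $Z_M(F)/Z_\Gamma(F)$ throughout. First I would fix a Levi subgroup $M\subseteq\Gamma$ carrying the cuspidal support of $\pi$, a parabolic $P=MN$ with opposite $\bar P=M\bar N$, and an irreducible supercuspidal $\rho\in\Rep(M(F))$ in that support chosen so that $\pi$ is the unique irreducible quotient of the standard module $I_{P(F)}^{\Gamma(F)}\rho$. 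The principal external input is the Aubert--Zelevinsky involution $D$: it satisfies $D\circ D=\mathrm{id}$ on irreducibles, fixes $\rho$, preserves the associate class of the cuspidal support (so $d(D(\pi))=d(\pi)$ and $d'(D(\pi))=d'(\pi)$), and commutes with parabolic induction and Jacquet restriction in the Grothendieck group; in particular it permutes the constituents of $I_{P(F)}^{\Gamma(F)}\rho$, carrying the quotient $\pi$ to an irreducible \emph{subobject} $D(\pi)$.

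\medskip

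\noindent\textit{Vanishing, the top group, and the torus core.} By Bernstein's second adjointness (the companion of the adjunctions in Proposition \ref{Ext}(3)) there is, for every $\pi'\in\Rep(\Gamma(F))$, a natural isomorphism $\Ext^\bullet_{\Gamma(F)}\bigl(I_{P(F)}^{\Gamma(F)}\rho,\pi'\bigr)\cong\Ext^\bullet_{M(F)}\bigl(\rho,J_{\bar N}(\pi')\bigr)$. Since $\rho$ is supercuspidal, Proposition \ref{Ext}(1) collapses the right-hand side --- after decomposing $J_{\bar N}(\pi')$ by generalized $Z_M(F)$-character --- to an $\Ext$-computation over the abelian group $Z_M(F)\cong\BZ^{d(\pi)}\times(\text{compact})$, which vanishes in degrees $>d(\pi)$; combined with the Bernstein decomposition of $\Rep(\Gamma(F))$ and the finiteness of the global cohomological dimension of the Bernstein block of $\pi$ (Bernstein; Schneider--Stuhler give an alternative proof via the building), this forces $\Ext^i_{\Gamma(F)}(\pi,\pi')=0$ for $i>d(\pi)$. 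The same adjunction, together with the compatibility of $D$ with Jacquet restriction and with the fact that $J_N(\pi)$ contains the relevant twist of $\rho$ with multiplicity exactly one (Frobenius reciprocity applied to the standard module), identifies $\Ext^{d(\pi)}_{\Gamma(F)}(\pi,D(\pi))$ with $H^{d(\pi)}\bigl(\BZ^{d(\pi)},\BC\bigr)\cong\BC$. Finally, in the ``torus core'' $\pi=\rho$ supercuspidal --- so $M=\Gamma$, $D(\rho)=\rho$, and $d(\rho)$ is the split rank of $Z_\Gamma$ --- Proposition \ref{Ext}(1) collapses every $\Ext$-group to group cohomology of $Z_\Gamma(F)\cong\BZ^{d(\rho)}\times(\text{compact})$, and the pairing of the theorem reduces to the classical \Poincare duality of this torus.

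\medskip

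\noindent\textit{The general case.} The pairing in the theorem is nothing but Yoneda composition, so the remaining content is its non-degeneracy. For this I would invoke a Schneider--Stuhler-type duality functor $\BD$ on the (derived category of the) category of smooth representations, realized through $\Gamma(F)$-equivariant sheaves on the Bruhat--Tits building: it is a contravariant involution, $\BD\circ\BD\cong\mathrm{id}$, compatible with parabolic induction and Jacquet restriction up to explicit shifts, and trivial on supercuspidals up to the contragredient and a shift by the split rank of the center --- which is exactly the torus core above. Granting the identification $\BD(\pi)\cong D(\pi)^\vee[d(\pi)]$ for every irreducible $\pi$, the non-degeneracy of the Yoneda pairing follows from the duality formalism of $\BD$: concretely, the trace isomorphism $\Ext^{d(\pi)}_{\Gamma(F)}(\pi,D(\pi))\cong\BC$ of the previous paragraph identifies $\Ext^{d(\pi)-i}_{\Gamma(F)}(\pi',D(\pi))$ with the linear dual of $\Ext^i_{\Gamma(F)}(\pi,\pi')$, which is the assertion of the theorem.

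\medskip

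\noindent\textit{The main obstacle.} The hard part is the identification $\BD(\pi)\cong D(\pi)^\vee[d(\pi)]$ for irreducible $\pi$ --- the point where the geometric $\BD$ must be matched with the combinatorially defined $D$. I would compute $\BD$ first on the standard module, using the embedding of the building of $M$ into that of $\Gamma$ and an excision/acyclicity argument to obtain $\BD\bigl(I_{P(F)}^{\Gamma(F)}\rho\bigr)\cong I_{P(F)}^{\Gamma(F)}\bigl(\BD_M(\rho)\bigr)$ up to shift --- the torus core then handling $\BD_M(\rho)$ --- and afterwards descend to the irreducible constituent $\pi$ by d\'evissage along the standard filtration. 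The delicate points are: (i) that $\BD$ must exchange ``$\pi$ is the quotient of $I_{P(F)}^{\Gamma(F)}\rho$'' with ``$D(\pi)$ is a subobject of $I_{P(F)}^{\Gamma(F)}\rho$'', which is exactly what pins $\BD$ to $D$ and what makes the Yoneda pairing of $\pi$ non-degenerate against that of $D(\pi)$; (ii) tracking the modulus twists and the shift $[d(\pi)]$, which varies with the cuspidal support, so that everything stays consistently balanced; and (iii) the passage between $\Rep(\Gamma(F))$ and $\Rep(\Gamma(F),\omega)$, i.e.\ between $d(\pi)$ and $d'(\pi)$, which I would handle by a K\"unneth decomposition of $\RHom$ along the split component of $Z_\Gamma(F)$. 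I expect (i) to carry the real mathematical weight, with (ii)--(iii) being technical but unavoidable.
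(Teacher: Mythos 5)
The statement you are proving is not proved in the paper at all: it is imported verbatim from Nori--Prasad \cite{NP20} (Theorems 1 and 2 there, building on Schneider--Stuhler), and the paper's ``proof'' is the citation. Measured against that, your proposal does not close the gap it sets out to close. The decisive step in your argument is the sentence ``Granting the identification $\BD(\pi)\cong D(\pi)^\vee[d(\pi)]$ for every irreducible $\pi$, the non-degeneracy \ldots follows from the duality formalism of $\BD$.'' But the existence of a duality functor $\BD$ with the listed properties \emph{together with} this identification against the Aubert--Zelevinsky involution is precisely the content of the Schneider--Stuhler/Nori--Prasad theorem; everything you defer to ``the main obstacle'' (computing $\BD$ on standard modules via the building, matching quotients with subobjects, tracking shifts and modulus twists, the passage from $d(\pi)$ to $d'(\pi)$) is the theorem itself, and your treatment of it is a plan of attack, not an argument. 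So the proposal is a (reasonable) road map for reproving \cite{NP20}, not a proof.

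Two secondary points would also need repair even within your outline. First, the vanishing $\Ext^i_{\Gamma(F)}(\pi,\pi')=0$ for $i>d(\pi)$ does not follow from second adjointness applied to the standard module alone: that gives vanishing for $I_{P(F)}^{\Gamma(F)}\rho$, and $\pi$ is only a quotient of it, so you must control the kernel (whose constituents share the cuspidal support) by an induction, or invoke the statement that the Bernstein block of $\pi$ has cohomological dimension exactly $d(\pi)$ --- which is again part of the Schneider--Stuhler package rather than a formal consequence of Proposition \ref{Ext}. Second, your normalization ``choose $\rho$ so that $\pi$ is the \emph{unique} irreducible quotient of $I_{P(F)}^{\Gamma(F)}\rho$, and $D(\pi)$ is then an irreducible subobject'' is not available for an arbitrary cuspidal support (uniqueness of the irreducible quotient is a Langlands-quotient phenomenon requiring a positivity choice, and the sub/quotient exchange under $D$ is exactly the delicate point (i) you flag); as written this is asserted, not established. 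If your intention is to use the theorem as the paper does, the honest route is simply to cite \cite{NP20}; if your intention is to prove it, the identification of $\BD$ with $D(\cdot)^\vee[d(\pi)]$ must be carried out, not granted.
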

  \begin{remark}In general, the non-degeneracy means that if $\pi^\prime=\varinjlim_n \pi_n^\prime$ is the inductive limit of  finite generated $\Gamma(F)$-submodules $\pi_n^\prime$, then 
 $$\Ext^i_{\Gamma(F)}(\pi,\pi^\prime) = \varinjlim_n  \Ext^i_{\Gamma(F)}(\pi,\pi_n^\prime),\quad \Ext^i_{\Gamma(F)}(\pi^\prime, D(\pi)) = \varprojlim_n  \Ext^i_{\Gamma(F)}(\pi_n^\prime, D(\pi))$$
 is the inductive limit (resp. projective limit) of finite dimensional $\BC$-spaces and  the pairing is direct limit of perfect pairings on these finite dimensional spaces. 
 
 In the triple product case,  the result of Aizenbud-Sayag \cite{AS20} guarantees all the Ext-groups below are finite dimensional and  the non-degeneracy has the usual meaning.
  \end{remark}

In the following, we shall concentrate on the triple product case. 

The case $D$ non-split is straightforward.
\begin{prop}Assume $D$ is non-split. Then for any $\pi\in\Rep(F^\times\bs G(F))$ irreducible, 
\[\Ext^i_{F^\times \bs H(F)}(\pi,\BC) = 0, \quad i > 0.\]
\end{prop}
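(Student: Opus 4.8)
The plan is to note that when $D$ is non-split the quotient $F^\times\bs H(F)=D^\times/F^\times$ is a \emph{compact} totally disconnected group, so that $\Rep(F^\times\bs H(F))$ is a semisimple category and every higher Ext-group vanishes for soft reasons; in particular no hypothesis on $\pi$ (irreducibility, genericity, or even finite length) is actually needed here.

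\textbf{Step 1: compactness of $D^\times/F^\times$.} Since $D$ is a quaternion \emph{division} algebra over the $p$-adic field $F$, it has a maximal order $\mathcal{O}_D$ with unit group $\mathcal{O}_D^\times$ a compact open subgroup of $D^\times$, and a uniformizer $\varpi_D$ with $\varpi_D^{2}\in F^\times\mathcal{O}_D^\times$ (indeed $\mathrm{Nm}(\varpi_D)$ is a uniformizer of $F$). Hence $D^\times=\bigsqcup_{j\in\BZ}\varpi_D^{\,j}\mathcal{O}_D^\times$ and $D^\times/F^\times$ is covered by the two compact subsets $\mathcal{O}_D^\times F^\times/F^\times$ and $\varpi_D\mathcal{O}_D^\times F^\times/F^\times$, so $F^\times\bs H(F)=D^\times/F^\times$ is compact. (Equivalently, $\mathrm{Nm}\colon D^\times\to F^\times$ has compact kernel $D^1$, the points of an anisotropic inner form of $\SL_2$, and $D^\times/(F^\times D^1)$ is finite.)

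\textbf{Step 2: semisimplicity and conclusion.} For a compact, totally disconnected group $\ol H:=F^\times\bs H(F)$ the category of smooth $\BC$-representations is semisimple: normalizing the Haar measure on $\ol H$ to total mass $1$, any smooth representation is a union of finite-dimensional subrepresentations, and averaging a linear projection over $\ol H$ splits any short exact sequence equivariantly. Thus every object of $\Rep(F^\times\bs H(F))$ — in particular the trivial representation $\BC$ — is both projective and injective, so
\[\Ext^i_{F^\times\bs H(F)}(\pi,\BC)=0\qquad\text{for all }i\geq1\]
and for every smooth $\pi$, a fortiori for the restriction to $H(F)$ of an irreducible $\pi\in\Rep(F^\times\bs G(F))$. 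This proves the proposition.

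\textbf{Main obstacle.} There is essentially none: the entire content is the elementary observation of Step 1 that $D^\times/F^\times$ is compact, after which Step 2 is the classical semisimplicity of smooth representations of a profinite group. (One could alternatively deduce the vanishing from Proposition~\ref{Ext}(1) together with Theorem~\ref{SSNP}, since here the cuspidal support of every irreducible $\pi$ is carried by $G$ itself, forcing $d(\pi)=d'(\pi)=0$; but the direct compactness argument is shortest and works for all smooth $\pi$.)
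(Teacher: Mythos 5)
Your proof is correct and uses the same essential observation as the paper: that $F^\times\bs H(F)=D^\times/F^\times$ is compact when $D$ is non-split (the paper phrases this as ``$Z_H\bs H$ is anisotropic''). Where the paper then cites Proposition~\ref{Ext}(1) to conclude that the supercuspidal representation $\BC$ is injective in $\Rep(H(F),1)$, you instead unwind this to the classical semisimplicity of smooth representations of a compact totally disconnected group via averaging; this is a bit more self-contained and makes it manifest, as you note, that the vanishing holds for \emph{all} smooth $\pi$, not only irreducible ones. Both arguments are correct and essentially equivalent.
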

\begin{proof}Note that $Z_H\bs H$ is anisotropic. Then by Proposition \ref{Ext} (1), $\BC$  is injective and the statement follows. 
\end{proof}
To deal with  the $D$ split case, we need to consider 
the following Waldspurger toric case.
\begin{lem}\label{EP-Wal}Assume $D$ is split. Let $K\subset D$ be an \'etale quadratic $F$-algebra and embed $F^\times$ into $D^\times(F)\times K^\times$ diagonally. Then for any generic irreducible $\pi\in\Rep(F^\times\bs (D^\times(F)\times K^\times)\big)$,  $$\Ext^i_{F^\times\bs K^\times}(\pi,\BC)=0,\quad i\geq1.$$
\end{lem}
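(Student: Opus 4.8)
I plan to reduce to the group $\GL_2(F)$ and to treat separately the cases where $K$ is a field and where $K\simeq F\oplus F$. If $K/F$ is a field, then the torus $F^\times\backslash K^\times$ is anisotropic, so its split center has rank $0$; applying Proposition~\ref{Ext}(1) with the irreducible representation $\BC$ gives $\Ext^i_{F^\times\backslash K^\times}(\pi,\BC)\cong\Ext^i_{F^\times\backslash K^\times}(\BC,\pi)=0$ for all $i\geq1$, and no genericity is needed. So assume $K\simeq F\oplus F$. Then $D^\times(F)=\GL_2(F)$, the subgroup $K^\times=F^\times\times F^\times$ is the diagonal torus $T\subset\GL_2(F)$, and we may write $\pi=\sigma\boxtimes\chi$ with $\sigma\in\Rep(\GL_2(F))$ generic irreducible and $\chi=\chi_1\boxtimes\chi_2$ a character of $T$ satisfying $\omega_\sigma\chi_1\chi_2=1$. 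The restriction $\pi|_{K^\times}$ is $\sigma|_T$ with the $T$-action twisted by $\chi$; since $\omega_\sigma\chi_1\chi_2=1$ it descends to a representation $V$ of $F^\times\backslash K^\times\simeq F^\times$, and $\Ext^i_{F^\times\backslash K^\times}(\pi,\BC)=\Ext^i_{F^\times}(V,\BC)$.

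Next I would use the Kirillov model. For $\sigma\in\Rep(\GL_2(F))$ generic irreducible, restriction to the mirabolic $P=\{\psmallmatrix{*}{*}{0}{1}\}$ with unipotent radical $N$ fits in a short exact sequence of $P$-modules $0\to\cind_N^P\psi\to\sigma|_P\to\tau\to 0$ with $\psi$ a nontrivial character of $N$ and $\tau$ a representation of $P/N\simeq\GL_1(F)$ of dimension $\leq2$ that vanishes exactly when $\sigma$ is supercuspidal. Restricting to the $\GL_1(F)$ inside $P$, using that $T$ is generated by this $\GL_1(F)$ and the center of $\GL_2(F)$, then twisting by $\chi$ and descending, one obtains a short exact sequence in $\Rep(F^\times)$
$$0\to \Cc{F^\times}\to V\to Q\to 0,$$
where $Q$ is finite-dimensional of dimension $\leq2$ and $\Cc{F^\times}=\cind_{\{1\}}^{F^\times}\BC$ carries the regular representation (the various normalizing twists cancel, precisely because $\omega_\sigma\chi_1\chi_2=1$). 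The key general fact I would invoke here is that $\Cc{F^\times}$ is a projective object of $\Rep(F^\times)$, since $\cind$ from the trivial subgroup is left adjoint to the exact forgetful functor.

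Now the $\Ext$-computation. As $F^\times\backslash K^\times\simeq\GL_1(F)$ has split rank $1$, Proposition~\ref{Ext}(1) (applied with the irreducible $\BC$) gives $\Ext^i_{F^\times}(V,\BC)=0$ for $i\geq2$. For $i=1$, the long exact sequence of $\Ext_{F^\times}(-,\BC)$ attached to the displayed sequence, together with the projectivity of $\Cc{F^\times}$, yields
$$\Hom_{F^\times}(\Cc{F^\times},\BC)\xrightarrow{\ \partial\ }\Ext^1_{F^\times}(Q,\BC)\longrightarrow\Ext^1_{F^\times}(V,\BC)\longrightarrow 0,$$
so it is enough to show $\partial$ is surjective. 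I would first note that, since $\sigma$ is irreducible, the trivial character of $F^\times$ occurs in $Q$ with multiplicity at most one: the two characters arising in the closed part of the Mackey filtration of $\sigma|_T$ coincide only at a reducibility point of the inducing data, while the Steinberg and supercuspidal cases are clear. The summands of $Q$ other than the trivial character have vanishing $\Hom$ and $\Ext^1$ into $\BC$ and may be discarded; this reduces us to the case $Q=\BC$, i.e. to showing $\Ext^1_{F^\times}(V,\BC)=0$ when $V$ is an extension $0\to\Cc{F^\times}\to V\to\BC\to 0$.

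The main obstacle, and the only genuine computation, is this last assertion: one must show the connecting map $\partial\colon\Hom_{F^\times}(\Cc{F^\times},\BC)\to\Ext^1_{F^\times}(\BC,\BC)$ is nonzero. Both sides are one-dimensional — the source is spanned by the Haar integral $\phi\colon f\mapsto\int_{F^\times}f(x)\,d^\times x$, and $\Ext^1_{F^\times}(\BC,\BC)=H^1(F^\times,\BC)=\BC$ — and $\partial(\phi)$ is the class of the pushout of $V$ along $\phi$. I would show it is nonzero by checking that $\phi$ does not extend to an $F^\times$-invariant functional on $V$: for any $f\in V$ mapping to $1\in\BC$ one has $\varpi\cdot f-f\in\Cc{F^\times}$, the scalar $\phi(\varpi\cdot f-f)$ is independent of the choice of $f$ because $\phi$ is $F^\times$-invariant, and a one-line computation in the Kirillov model — taking $f$ to be the characteristic function of $\{|x|\leq1\}$, so that $\varpi\cdot f-f$ is, up to sign, the characteristic function of $\CO^\times$ — gives $\phi(\varpi\cdot f-f)=\pm\vol(\CO^\times,d^\times x)\neq0$. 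Hence $\partial$ is surjective, $\Ext^1_{F^\times}(V,\BC)=0$, and the lemma follows. Equivalently, this step can be phrased as: $\Ext^1_{F^\times}(\BC,\Cc{F^\times})\simeq\BC$ is generated by the non-split extension and $\phi$ induces an isomorphism $\Ext^1_{F^\times}(\BC,\Cc{F^\times})\xrightarrow{\sim}\Ext^1_{F^\times}(\BC,\BC)$, so $\partial$ takes the generator $[V]$ to a generator.
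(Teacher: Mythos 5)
Your reduction to the case $K=F\oplus F$, the passage to the descended $F^\times$-module $V$, and the strategy of combining the Kirillov (Bernstein--Zelevinsky) filtration $0\to \Cc{F^\times}\to V\to Q\to 0$ with the projectivity of $\Cc{F^\times}$ are all reasonable; this is a genuinely different route from the paper's. However, there is a gap in the key intermediate claim that the trivial character of $F^\times$ occurs in $Q$ with multiplicity at most one, justified by ``the two characters coincide only at a reducibility point of the inducing data.'' That justification is false: for $\sigma=I_{B(F)}^{\GL_2(F)}(\mu\boxtimes\mu)$, which is an \emph{irreducible} principal series (since $\mu/\mu=1\neq|\cdot|^{\pm1}$), the two characters occurring in the semisimplification of $J_N(\sigma)$ are equal, and choosing $\chi$ to match them (compatible with $\omega_\sigma\chi_1\chi_2=1$) makes the trivial character occur in $Q^{\mathrm{ss}}$ with multiplicity two. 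Moreover, in this case $J_N(\sigma)$ (hence $Q$) is an \emph{indecomposable} self-extension of one character---if it were split, Frobenius reciprocity would give $\dim\Hom_G(\sigma,\sigma)\geq 2$, contradicting irreducibility---so $Q$ is not a direct sum of characters and your step ``discard the summands of $Q$ other than the trivial character'' does not apply. Your argument can in fact be patched (one checks $\Ext^1_{F^\times}(Q,\BC)$ is still one-dimensional for this indecomposable $Q$), but as written the proof omits a case that actually occurs.

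For comparison, the paper's proof is much shorter and avoids the filtration entirely. It applies the Schneider--Stuhler duality (Theorem~\ref{SSNP}) to $\BC\in\Rep(F^\times\bs K^\times)$ (with $D(\BC)=\BC$, $d(\BC)=1$) to get
$\dim\Ext^1_{F^\times\bs K^\times}(\sigma\boxtimes\xi,\BC)=\dim\Hom_{K^\times}(\xi^{-1},\sigma)$,
and then observes in the Kirillov model that a vector on which $T$ acts by a character would be a scalar multiple of a character of $F^\times$, which cannot lie in $\CK(\sigma)$ because Kirillov functions vanish for large $|x|$. This one-line argument is insensitive to whether the inducing characters coincide, which is exactly the degenerate configuration your approach has to handle by hand.
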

\begin{proof}
Assume $\pi=\sigma\boxtimes\xi$ with $\sigma\in\Rep(D^\times(F))$ and $\xi\in \Rep(K^\times)$. If $K/F$ is a field extension, then   
$\Ext^i_{F^\times\bs K^\times}(\sigma\boxtimes\xi,\BC)=0$ for $i\geq1$ by Proposition \ref{Ext} (1).

If $K=F\oplus F$, $\Ext^i_{F^\times\bs K^\times}(\sigma\boxtimes\xi,\BC)=0$ for $i\geq2$ by Proposition \ref{Ext} (1). 
For $i=1$, one can apply Theorem \ref{SSNP} to $\BC\in \Rep(F^\times\bs K^\times)$, where $d(\BC)=1$ and $D(\BC)=\BC$. One has $$\dim_\BC \Ext^1_{F^\times\bs K^\times}(\sigma\boxtimes\xi, \BC)=\dim_\BC \Hom_{F^\times\bs K^\times}(\BC, \sigma\boxtimes\xi)=\dim_\BC \Hom_{K^\times}(\xi^{-1}, \sigma).$$
     Identify $K^\times$ with the split torus $T(F)$ and  let $\CK(\sigma)$ be the Kirillov model of $\sigma$. Write $\xi=\xi_1\boxtimes\xi_2$. Then by \cite[Proposition 4.7.2]{Bum97}, $$\Hom_{T(F)}(\xi^{-1},\CK(\sigma)|_T)=\{\phi\in \CK(\sigma)\ |\  \phi(ax)=\xi_1^{-1}(a)\phi(x) \text{ for any } a\in F^\times\}=0.$$
\end{proof}


Assume $D$ is split. By Proposition \ref{Ext}(1), $\Ext^i_{F^\times \bs H(F)}(\pi,\BC) = 0$ for $i \geq 2$. In the following, we compute $\Ext^1_{F^\times \bs H(F)}(\pi,\BC)$ case by case.   Let $T\subset B\subset \GL_2$ be the diagonal torus and the group of upper triangular matrices respectively,  $\St_F$ be the Steinberg representation for $\GL_2(F)$ and $|\cdot|_F$ be the norm character on $F^\times$.
\begin{prop}\label{case I}Assume $D$ is split and $L/F$ is a cubic field extension. Then for  $\pi\in\Rep(F^\times\bs G(F))$ irreducible, $\Ext^1_{F^\times\bs H(F)}(\pi,\BC)=0$.
 \end{prop}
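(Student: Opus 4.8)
The plan is to reduce to the Waldspurger toric case Lemma \ref{EP-Wal} via the Mackey/geometric decomposition of $H(F) = \GL_2(F)$ acting on $G(F) = \GL_2(L)$. Here $H = D^\times = \GL_2$ embeds diagonally in $G = \Res_{L/F}\GL_2$, and since $L/F$ is a cubic field extension, $L$ is a field, so $G(F) = \GL_2(L)$. First I would fix an irreducible $\pi \in \Rep(F^\times \bs G(F))$ with central character $\omega$ on $Z_G(F) = L^\times$, whose restriction to $Z_H(F) = F^\times$ is trivial. By Proposition \ref{Ext}(1), $\Ext^i_{F^\times\bs H(F)}(\pi,\BC)$ already vanishes for $i \geq 2$, so only $i = 1$ remains.

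The key input is the Schneider--Stuhler duality of Theorem \ref{SSNP}, applied to the trivial representation $\BC \in \Rep(F^\times \bs H(F))$. One has $d'(\BC) = 1$ (the split rank of $Z_H \cap [H,H]$, i.e.\ of the center of $\SL_2$ mod the center of $\GL_2$... more precisely $d'(\BC)$ equals the split rank of $Z_H/(Z_H\cap[H,H])$-complement, which is $1$ for $\GL_2$) and $D(\BC) = \BC$. Thus Theorem \ref{SSNP}, combined with Proposition \ref{Ext}(2), gives
\[
\dim_\BC \Ext^1_{F^\times\bs H(F)}(\pi,\BC) = \dim_\BC \Ext^0_{F^\times\bs H(F)}(\BC, \pi^\vee) = \dim_\BC \Hom_{F^\times\bs H(F)}(\BC,\pi^\vee) = \dim_\BC \Hom_{H(F)}(\pi, \BC),
\]
using that $\BC$ is self-dual and that $\pi\mapsto\pi^\vee$ swaps the two Hom-spaces. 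So the problem collapses to showing $\Hom_{\GL_2(F)}(\pi,\BC) = 0$, i.e.\ that $\pi$ has no $\GL_2(F)$-fixed quotient (equivalently, $\BC$ is not a $\GL_2(F)$-subrepresentation of $\pi^\vee$).

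To prove this vanishing of the plain Hom, I would invoke Prasad's local trichotomy / the known multiplicity results for the triple product in the cubic-field case (via Theorem \ref{wan} and the Remark following it, since the local root number / $\epsilon$-dichotomy governs $\Hom_{\GL_2(F)}(\pi_L,\BC)$ where $\pi_L$ is an irreducible $\GL_2(L)$-representation): when $L/F$ is a field extension, the relevant period is controlled by the restriction of $\pi$ to $\GL_2(F)$, and for the trivial character of $\GL_2(F)$ to appear one needs $\pi$ itself to be one-dimensional (a character of $\GL_2(L)$ composed with $\det$) with an appropriate compatibility — but then the central character constraint $\omega|_{F^\times} = 1$ together with the structure of $L^\times/F^\times$ for $L/F$ cubic forces a contradiction, or alternatively such a $\pi$ has no $\GL_2(F)$-invariant functional because $\det(\GL_2(F))= F^\times$ and a nontrivial character of $L^\times$ restricted to $F^\times$ composed with a nontrivial... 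Concretely: if $\pi = \xi\circ\det_L$ then $\Hom_{\GL_2(F)}(\pi,\BC) = \Hom_{F^\times}(\xi_L\circ\det_F\cdots)$ — this is nonzero iff $\xi|_{F^\times} = 1$, but combined with $\xi|_{F^\times}\cdot(\text{from central char}) $... I would need to check that the central-character normalization $F^\times \bs$ rules this out, and handle the generic and the remaining (twists of Steinberg, principal series of $\GL_2(L)$) cases separately, using that restriction to $\GL_2(F)$ of an infinite-dimensional $\GL_2(L)$-representation is "large" and cannot have the trivial quotient.

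\textbf{Main obstacle.} The hard part will be the last step: the genuine input that $\Hom_{\GL_2(F)}(\pi,\BC) = 0$ for \emph{all} irreducible $\pi$ (generic or not) when $L/F$ is a cubic field. One expects this from Prasad's theorem, but Prasad's results as cited cover generic/tempered $\pi$, so for the non-generic, non-tempered cases (one-dimensional $\pi$, twists of Steinberg times something, certain reducible principal series constituents) I would have to argue directly — most likely via a Mackey-theory / Kirillov-model computation analogous to the $K = F\oplus F$ subcase in the proof of Lemma \ref{EP-Wal}, or by a Jacquet-module/geometric-lemma argument showing no trivial $\GL_2(F)$-quotient can occur. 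Reconciling the central-character bookkeeping (the $F^\times\bs$ quotients on both $G$ and $H$, and the fact that $L^\times/F^\times$ has the full split rank contributing to $d(\pi)$ versus $d'(\pi)$) is the delicate accounting that the proof must get right.
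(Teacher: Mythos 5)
Your key identity is wrong and this kills the proof. You claim $D(\BC)=\BC$, but the Aubert--Zelevinsky involution of the trivial representation of $\GL_2(F)$ is the Steinberg representation: $D(\BC)=\St_F$. The paper uses exactly this. The correct reduction coming out of Theorem \ref{SSNP} (applied to the irreducible object $\BC$, with $d(\BC)=1$, $D(\BC)=\St_F$, plus a contragredient/self-duality manipulation) is
\[
\dim_\BC\Ext^1_{F^\times\bs H(F)}(\pi,\BC)=\dim_\BC\Hom_{H(F)}(\St_F,\pi),
\]
not $\dim_\BC\Hom_{H(F)}(\pi,\BC)$. The difference is not cosmetic: your target statement $\Hom_{\GL_2(F)}(\pi,\BC)=0$ for all irreducible $\pi\in\Rep(F^\times\bs G(F))$ is simply false. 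Take $\pi=\xi\circ\det_L$ with $\xi|_{F^\times}=1$; then the central character condition is met and $\pi|_{\GL_2(F)}=\xi\circ\Nm_{L/F}\circ\det_F$ has a trivial quotient, so $\Hom_{\GL_2(F)}(\pi,\BC)\neq 0$. This is the very case where the discrepancy matters: $\Hom(\St_F,\pi)=0$ (one-dimensional target, infinite-dimensional source), and the proposition holds; but your formula would incorrectly predict $\Ext^1\neq 0$. So the whole reduction would have you proving something false, and the ``main obstacle'' you flagged is not a missing detail but an impossible step.

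Beyond the involution error, the outline also skips the mechanics that actually close the argument. The duality alone handles only the one-dimensional (and, with projectivity, the supercuspidal) case. For $\pi=I_{B(L)}^{\GL_2(L)}\chi$ (or its irreducible constituents) one still has to compute: the paper uses the Mackey/geometric lemma filtration of $\pi|_{\GL_2(F)}$ with graded pieces $i_{F^\times}^{H(F)}\BC$ and $I_{B(F)}^{H(F)}\chi\delta_B$, runs the long exact sequence, and then invokes the duality $\dim\Ext^1(I_B^H\chi\delta_B,\BC)=\dim\Hom(\St_F,I_B^H\chi\delta_B)=0$ for one term and Frobenius reciprocity/Lemma \ref{EP-Wal}--style input for the other. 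If you redo the proof, you should first fix $D(\BC)=\St_F$ and then actually carry out the three-case analysis (supercuspidal, one-dimensional, principal/special series) as in the paper; the cubic-field hypothesis enters through the geometric lemma giving exactly one closed orbit and one open orbit for $\GL_2(F)\backslash \GL_2(L)/B(L)$, not through any global root-number input.
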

\begin{proof}By Proposition \ref{Ext} (1,3)(the modulus character of $H$ is trivial), when $\pi$ is supercuspidal, $$\Ext^1_{F^\times\bs H(F)}(\pi,\BC) = \Ext^1_{F^\times\bs G(F)}(\pi,I_{H(F)}^{G(F)}\BC) = 0.$$

If $\pi$ is one-dimensional, then by Theorem \ref{SSNP} for $\BC\in\Rep(F^\times\bs H(F))$, where $D(\BC)=\St_F$ and $d(\BC)=\BC$,
$$\dim_\BC\Ext^1_{F^\times\bs H(F)}(\pi,\BC) = \dim_\BC\Hom_{H(F)}(\St_F, \pi) = 0.$$

Note that when $\pi$ is a special series, then $\pi\hookrightarrow I_{B(L)}^{\GL_2(L)}\chi_1\boxtimes\chi_2$  with $\chi_1\chi_2^{-1}=|\cdot|_L$. Thus it suffices to show that for $\pi= I_{B(L)}^{\GL_2(L)}\chi\in\Rep(F^\times\bs G(F))$ where $\chi=\chi_1\boxtimes\chi_2$ with $\chi_1\chi_2^{-1}\neq|\cdot|_L^{-1}$, 
$$\Ext^1_{F^\times\bs H(F)}(\pi,\BC)=0.$$
By the geometric lemma (see \cite[Section 6.1]{Pra92}), there exists an exact sequence of $F^\times\bs H(F)$-representations
$$0\to i_{F^\times}^{H(F)}\BC\to \pi \to I_{B(F)}^{H(F)}\chi\delta_B\to0$$
where  $\delta_B$ is the modulus character of $B(F)$. The short exact sequence induces a long exact sequence
\begin{align*}
  0 \to &\Hom_{F^\times\bs H(F)}(I_{B(F)}^{H(F)}\chi\delta_B,\BC) \to \Hom_{F^\times\bs H(F)}(\pi,\BC) \to \Hom_{F^\times\bs H(F)}(i_{F^\times}^{H(F)}\BC,\BC) \to\\
&  \Ext^1_{F^\times\bs H(F)}(I_{B(F)}^{H(F)}\chi\delta_B,\BC) \to \Ext^1_{F^\times\bs H(F)}(\pi,\BC) \to \Ext^1_{F^\times\bs H(F)}(i_{F^\times}^{H(F)}\BC,\BC) \to 0.
\end{align*}
By Proposition \ref{Ext}(2,3),
$$\Hom_{F^\times\bs H(F)}(i_{F^\times}^{H(F)}\BC,\BC)=\BC,\quad \Ext^1_{F^\times\bs H(F)}(i_{F^\times}^{H(F)}\BC,\BC)=0.$$
Note that 
\begin{itemize}
    \item if $\pi$ is reducible, $I_{B(F)}^{H(F)}\chi\delta_B$ is irreducible;
    \item if $\pi$ is irreducible,   we may assume $I_{B(F)}^{H(F)}\chi\delta_B$ is irreducible by replacing  $\chi$ by $\chi^w$, the twisting of $\chi$ by $w=\begin{pmatrix} 0 & 1\\ 1 & 0\end{pmatrix}$, if necessary.
\end{itemize} 
Thus   $$\Hom_{F^\times\bs H(F)}(I_{B(F)}^{H(F)}\chi\delta_B,\BC)=0.$$
Moreover by Theorem \ref{SSNP} for $\BC\in\Rep(F^\times\bs H(F))$, where $D(\BC)=\St_F$ and $d(\BC)=1$,
$$\dim_\BC\Ext^1_{F^\times\bs H(F)}(I_{B(F)}^{H(F)}\chi\delta_B,\BC) = \dim_\BC\Hom_{F^\times\bs H(F)}(\St_F, I_{B(F)}^{H(F)}\chi\delta_B) = 0.$$
Consequently, $\Hom_{H(F)}(\pi,\BC) = \BC$ and $\Ext^1_{F^\times\bs H(F)}(\pi,\BC) = 0$.
\end{proof} 
\begin{prop}\label{case II}Assume $D$ is split and  $L=E\oplus F$. For $\pi=\pi_1\boxtimes\pi_2\in\Rep(F^\times\bs G(F))$ irreducible, $\dim_\BC\Ext^1_{F^\times\bs H(F)}(\pi,\BC) \leq 1$ with the equality holds if and only if \begin{itemize}
        \item $\pi_2=\xi\circ\det$, $\pi_1=I_{B(E)}^{\GL_2(E)}\chi_1\boxtimes\chi_2$ such that $\chi_1|_{F^\times}\xi=\chi_2|_{F^\times}\xi=1$ and $\xi_E\chi^\prime\neq1$
        (See Proposition \ref{Strongform} for the notations $\xi_E$ and $\chi'$); or
        \item $\pi_1=\xi\circ\det$ and $\pi_2=\St_F\otimes\xi^{-1}|_{F^\times}$. 
    \end{itemize}
\end{prop}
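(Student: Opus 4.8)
The plan is to follow the pattern of Proposition~\ref{case I}: reduce to $\Ext^{1}$, clear the supercuspidal cases by projectivity/injectivity, and otherwise feed the $\GL_{2}(E)\!\downarrow\!\GL_{2}(F)$ branching filtration into a long exact sequence. By Proposition~\ref{Ext}(1) we already have $\Ext^{i}_{F^{\times}\bs H(F)}(\pi,\BC)=0$ for $i\ge 2$, so only $\Ext^{1}$ is at stake. Writing $\pi=\pi_{1}\boxtimes\pi_{2}$ and $\omega:=\omega_{\pi_{1}}|_{F^{\times}}=\omega_{\pi_{2}}^{-1}$, Proposition~\ref{Ext}(2) identifies $\Ext^{i}_{F^{\times}\bs H(F)}(\pi,\BC)$ with $\Ext^{i}_{\Rep(\GL_{2}(F),\omega)}(\pi_{1}|_{\GL_{2}(F)},\pi_{2}^{\vee})$. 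If $\pi_{2}$ is supercuspidal then $\pi_{2}^{\vee}$ is injective in $\Rep(\GL_{2}(F),\omega)$ and the group vanishes; if $\pi_{1}$ is supercuspidal then a Mackey decomposition of $i_{K}^{\GL_{2}(E)}\rho$ (for $\pi_{1}=i_{K}^{\GL_{2}(E)}\rho$, $K$ open and compact modulo $F^{\times}$) restricted to $\GL_{2}(F)$ writes $\pi_{1}|_{\GL_{2}(F)}$ as a direct sum of compact inductions from open subgroups compact modulo $F^{\times}$, hence a projective object of $\Rep(\GL_{2}(F),\omega)$, and again the group vanishes. Neither supercuspidal case appears in the asserted list, so from now on $\pi_{1},\pi_{2}$ are non-supercuspidal.

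Next I would take $\pi_{1}\hookrightarrow I_{B(E)}^{\GL_{2}(E)}(\chi_{1}\boxtimes\chi_{2})=:P$ and analyse $P|_{\GL_{2}(F)}$ by the geometric lemma: $\GL_{2}(F)$ has two orbits on $\BP^{1}(E)=\GL_{2}(E)/B(E)$, the closed one $\BP^{1}(F)=\GL_{2}(F)/B(F)$ and the open one $\GL_{2}(F)/T_{E}(F)$ with $T_{E}\subset\GL_{2}$ the torus $T_{E}(F)\cong E^{\times}$, yielding an exact sequence of $F^{\times}\bs\GL_{2}(F)$-modules
\[
0\longrightarrow i_{T_{E}(F)}^{\GL_{2}(F)}\psi\longrightarrow P|_{\GL_{2}(F)}\longrightarrow I_{B(F)}^{\GL_{2}(F)}\nu\longrightarrow 0 ,
\]
where, after bookkeeping the modulus characters ($\delta_{B(E)}|_{T(F)}=\delta_{B(F)}^{2}$ since $|{\cdot}|_{E}|_{F^{\times}}=|{\cdot}|_{F}^{2}$), one gets $\nu=(\chi_{1}|_{F^{\times}}|{\cdot}|_{F}^{1/2})\boxtimes(\chi_{2}|_{F^{\times}}|{\cdot}|_{F}^{-1/2})$ and $\psi=\chi_{1}\bar\chi_{2}=\chi'$ as a character of $E^{\times}$. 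Tensoring with $\pi_{2}$ (stable under $i_{T_{E}(F)}^{\GL_{2}(F)}$ and $I_{B(F)}^{\GL_{2}(F)}$ by the projection formula) and applying $\Ext^{\bullet}_{F^{\times}\bs\GL_{2}(F)}(-,\BC)$, the open piece gives, by Frobenius reciprocity for compact induction, $\Ext^{i}_{F^{\times}\bs T_{E}(F)}(\psi\otimes\pi_{2}|_{E^{\times}},\BC)$, which vanishes for $i\ge 1$ because $E^{\times}/F^{\times}$ is compact (this is the mechanism of Lemma~\ref{EP-Wal}) and equals the toric space $W:=\Hom_{F^{\times}\bs E^{\times}}(\psi\otimes\pi_{2}|_{E^{\times}},\BC)$ for $i=0$; the closed piece gives, by Proposition~\ref{Ext}(2,3), $\Ext^{i}_{\Rep(T(F),\omega_{\pi_2})}(J_{N}(\pi_{2}),\nu^{-1})$, which (since $T(F)/F^{\times}\cong F^{\times}$ has cohomological dimension $1$ and $J_{N}(\pi_{2})$ has length $\le 2$, with the self-extension in $J_{N}(I_{B(F)}^{\GL_{2}(F)}(\mu\boxtimes\mu))$ non-split) is a space $V$ with $\dim_{\BC}V\le 1$, nonzero exactly when $\nu^{-1}$ occurs in $J_{N}(\pi_{2})$.

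The long exact sequence then yields $\Ext^{1}_{F^{\times}\bs H(F)}(\pi,\BC)=\coker(\partial\colon W\to V)$, so immediately $\dim_{\BC}\Ext^{1}\le\dim_{\BC}V\le 1$. Comparing with the degree-$0$ part of the same sequence gives $m(\pi)=\dim_{\BC}V+\dim_{\BC}\ker\partial$, whence $\dim_{\BC}\Ext^{1}_{F^{\times}\bs H(F)}(\pi,\BC)=m(\pi)-\dim_{\BC}W$; here $W$ is the Waldspurger toric period, which one identifies with $m_{\mathrm{geo}}(\pi)$ via \cite[Proposition~2.7]{Wan21}. A case check on the types of $\pi_{1},\pi_{2}$, using the classification of $m(\pi)$ for $\GL_{2}(E)/\GL_{2}(F)$ (\cite{Pra92,Mat11}) and the Tunnell--Saito dichotomy for $W$, then gives $\dim_{\BC}\Ext^{1}\le 1$ with equality exactly on the two listed families. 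Concretely: if $\pi_{1}=\xi\circ\det$ then $\pi_{1}|_{\GL_{2}(F)}=\xi|_{F^{\times}}\circ\det$ and the group is $\Ext^{1}_{F^{\times}\bs\GL_{2}(F)}(\pi_{2}\otimes(\xi|_{F^{\times}}\circ\det),\BC)$, one-dimensional exactly when $\pi_{2}\otimes(\xi|_{F^{\times}}\circ\det)\cong\St_{F}$, i.e.\ $\pi_{2}=\St_{F}\otimes\xi^{-1}|_{F^{\times}}$; if $\pi_{1}$ is a twist of $\St_{E}$ one runs the argument through $0\to\pi_{1}\to P\to\eta\circ\det_{E}\to 0$ and finds that whenever $V\ne 0$ the period $W\ne 0$, so $\partial$ is onto and $\Ext^{1}=0$; and if $\pi_{1}=I_{B(E)}^{\GL_{2}(E)}(\chi_{1}\boxtimes\chi_{2})$ with $\pi_{2}=\xi\circ\det$, then $\nu^{-1}=J_{N}(\pi_{2})$ forces $\chi_{1}|_{F^{\times}}=\chi_{2}|_{F^{\times}}=\xi^{-1}$, after which $W=\Hom_{F^{\times}\bs E^{\times}}(\xi_{E}\chi',\BC)\ne 0$ iff $\xi_{E}\chi'=1$, so $\dim_{\BC}\Ext^{1}=1$ exactly when $\chi_{1}|_{F^{\times}}\xi=\chi_{2}|_{F^{\times}}\xi=1$ and $\xi_{E}\chi'\ne 1$.

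The one genuinely delicate point is the behaviour of the connecting map $\partial\colon W\to V$ — equivalently the identity $\dim_{\BC}\Ext^{1}=m(\pi)-\dim_{\BC}W$: one must either compute $\partial$ directly as a distribution on $\BP^{1}(E)$ (weighing the $T_{E}(F)$-orbit against the $B(F)$-orbit term) or, as I would prefer, import the exact values of $m(\pi)$ and $W$ from the branching and toric-period literature and conclude by a dimension count. A secondary annoyance is that when $\pi_{1}$ (or $\pi_{2}$) is a twist of Steinberg the geometric-lemma filtration does not respect the constituent structure, so those cases have to be handled through the auxiliary short exact sequences above rather than read off directly.
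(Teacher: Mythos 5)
Your proposal is essentially the paper's proof, repackaged. Both reduce to $\Ext^1$ via Proposition~\ref{Ext}(1), dispatch the supercuspidal cases, run the Bernstein--Zelevinsky orbit filtration of $I_{B(E)}^{\GL_2(E)}\chi|_{\GL_2(F)}$ into a six-term $\Hom/\Ext^1$ sequence, use Lemma~\ref{EP-Wal} to kill the open-orbit $\Ext^1$, and invoke Schneider--Stuhler duality to get $\dim V=\dim\Hom(\text{closed piece},\pi_2^\vee)\le 1$. Your ``delicate point'' — controlling the connecting map $\partial\colon W\to V$ — is handled by exactly the same device as the paper: the identity $\dim\Ext^1=m(\pi)-\dim W$ (a consequence of the duality $\dim\Hom(\text{closed})=\dim\Ext^1(\text{closed})$) together with the import of $m(\pi)\le 1$ from Prasad's multiplicity-one and the Tunnell--Saito dichotomy for the toric period $W$; the paper phrases this as $\EP=\dim W$ plus explicit multiplicity-one inputs rather than as the single formula, but it is the same dimension count. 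Two minor differences worth noting: for $\pi_1$ supercuspidal you supply the Mackey argument (compact induction decomposes into projectives) where the paper only asserts vanishing, which is a useful addition; and your route to $\dim V\le 1$ leans on the non-splitness of $J_N\bigl(I_{B(F)}^{\GL_2(F)}(\mu\boxtimes\mu)\bigr)$ where the paper reads it off from the Schneider--Stuhler pairing and multiplicity-freeness of the closed-orbit principal series (both are correct, and in fact each implies the other). Overall this is the paper's approach and all the essential steps are present.
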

\begin{proof}  Denote by $\omega^{-1}$ the central character of $\pi_2$. Then by  Proposition \ref{Ext} (2) 
$$\Ext^i_{F^\times\bs H(F)}(\pi,\BC) \cong \Ext^i_{\Rep(H(F),\omega)}(\pi_1,\pi_2^\vee)$$
and   if $\pi_1$ or $\pi_2$ is supercuspidal, $\Ext^1_{F^\times\bs H(F)}(\pi,\BC)=0$.

When $\pi_1$ is one-dimensional,  by Theorem \ref{SSNP} for $\BC\in\Rep(F^\times\bs H(F))$,  where $D(\BC)=\St_F$ and $d(\BC)=1$,
$$\dim_\BC\Ext^1_{F^\times\bs H(F)}(\pi,\BC)=\dim_\BC\Hom_{H(F)}(\St_F, \pi)\leq 1$$
with the equality holds if and only if $\pi_1=\xi\circ\det$ and $\pi_2=\St_F\otimes\xi^{-1}|_{F^\times}$. 

Note that when $\pi_1$ is a special series,  $\pi\hookrightarrow I_{B(E)}^{\GL_2(E)}\chi_1\boxtimes\chi_2$ with $\chi_1\chi_2^{-1}=|\cdot|_E$. Thus it suffices to consider $\pi_1= I_{B(E)}^{\GL_2(E)}\chi$ for $\chi=\chi_1\boxtimes\chi_2$ where $\chi_1\chi_2^{-1}\neq|\cdot|_E^{-1}$. By the geometric lemma (See \cite[Section 4.1]{Pra92}), there is an  exact sequence of $H(F)$-representations
$$0\to i_{E^\times}^{H(F)}\chi^\prime\to \pi_1
\to I_{B(F)}^{H(F)}\chi\delta_B^{1/2}\to0.$$
By Proposition \ref{Ext}(2,3) and Lemma \ref{EP-Wal},
$$ \Hom_{H(F)}(i_{E^\times}^{\GL_2(F)}\chi^\prime,\pi_2^\vee)\cong  \Hom_{E^\times}(\pi_2\otimes\chi^\prime, \BC);$$
$$ \Ext^1_{\Rep(H(F),\omega)}(i_{E^\times}^{\GL_2(F)}\chi^\prime,\pi_2^\vee)\cong  \Ext^1_{F^\times\bs E^\times}(\pi_2\otimes\chi^\prime, \BC)=0;$$
$$\Ext^i_{\Rep(H(F),\omega)}(I_{B(F)}^{\GL_2(F)}\chi\delta_B^{1/2},\pi_2^\vee)\cong\Ext^i_{\Rep(T(F),\omega^{-1})}(J_N(\pi_2),\delta_B^{-1/2}\chi^{-1}) \cong \Ext^i_{F^\times\bs T(F)}(J_N(\pi_2)\otimes\delta_B^{1/2}\chi,\BC).$$
Then by Theorem \ref{SSNP} for $\BC\in \Rep(F^\times\bs T(F))$, where $D(\BC)=\BC$ and $d(\BC)=1$,
$$\dim_\BC\Ext^1_{\Rep(H(F),\omega)}(I_{B(F)}^{\GL_2(F)}\chi\delta_B^{1/2},\pi_2^\vee)=\dim_\BC\Hom_{H(F)}(I_{B(F)}^{\GL_2(F)}\chi\delta_B^{1/2},\pi_2^\vee)\leq 1$$
 with the equality holds if and only if $\delta_B^{-1/2}\chi^{-1}|_{T(F)}$ is a Jordan factor of $J_N(\pi_2)$.
By the long exact sequence
\begin{align*}
  0\to &\Hom_{H(F)}(I_{B(F)}^{H(F)}\chi\delta_B^{1/2},\pi_2^\vee)\to \Hom_{H(F)}(\pi_1,\pi_2^\vee)\to
  \Hom_{H(F)}(i_{E^\times}^{H(F)}\chi^\prime,\pi_2^\vee)\to \\
 & \Ext^1_{\Rep(H(F),\omega)}(I_{B(F)}^{H(F)}\chi\delta_B^{1/2},\pi_2^\vee)\to 
   \Ext^1_{\Rep(H(F),\omega)}(\pi_1,\pi_2^\vee)\to \Ext^1_{\Rep(H(F),\omega)}(i_{E^\times}^{H(F)}\chi^\prime,\pi_2^\vee)\to0,
 \end{align*}
one has that
\begin{enumerate}[(i)]
    \item $\EP_{F^\times\bs H(F)}(\pi,\BC) = \dim_\BC \Hom_{E^\times}(\pi_2\otimes\chi^\prime, \BC) \leq 1$
\item unless $\delta_B^{-1/2}\chi^{-1}|_{T(F)}$ is a Jordan factor of $J_N(\pi_2)$, $\Ext^1_{F^\times\bs H(F)}(\pi,\BC)=0,$
    \item if $\Hom_{E^\times}(\pi_2\otimes\chi^\prime, \BC)=0$, 
    $$\dim_\BC\Ext^1_{F^\times\bs H(F)}(\pi,\BC)=\dim_\BC\Ext^1_{\Rep(H(F),\omega)}(I_{B(F)}^{H(F)}\chi\delta_B^{1/2},\pi_2^\vee).$$
\end{enumerate}

If $\delta_B^{-1/2}\chi^{-1}|_{T(F)}$ is a Jordan factor of $J_N(\pi_2)$ and $\pi_1=I_{B(E)}^{\GL_2(E)}\chi$ sits in the exact sequence
$$0\to\St_E\otimes\mu\to \pi_1\to \mu\to0,$$
  one has  $\pi_2=I_{B(F)}^{\GL_2(F)}\chi^{-1}\delta_B^{-1/2}$  is irreducible. 
Then by Saito-Tunnell, $\EP_{F^\times\bs H(F)}(\pi,\BC)=1$ and hence $$\dim_\BC \Hom_{H(F)}(\St_E\otimes\mu,\pi_2^\vee)=\dim_\BC \Ext^1_{\Rep(H(F),\omega)}(\St_E\otimes\mu,\pi_2^\vee)+1.$$
Note that $$\dim_\BC\Hom_{H(F)}(\St_E\otimes\mu,\pi_2^\vee)=\dim_\BC\Hom_{F^\times\bs H(F)}(\St_E\otimes\mu\otimes \pi_2,\BC)\leq1.$$
one deduce $\Ext^1_{F^\times\bs H(F)}(\St_E\otimes\mu\otimes \pi_2,\BC)=0$ and hence $\Ext^1_{F^\times\bs H(F)}(\pi,\BC)=0$  from the induced long exact sequence.

 Assume $\pi_1=I_{B(E)}^{\GL_2(E)}\chi$ is irreducible.  If  $\chi_1|_{F^\times}\neq \chi_2|_{F^\times}$, then  up to replacing $\chi$ by $\chi^w$, one can make $\delta_B^{-1/2}\chi^{-1}|_{T(F)}$ different from the Jordan Holder factors of $J_N(\pi_2)$. Then 
$\Ext^1_{F^\times\bs H(F)}(\pi,\BC)=0$ by (ii). 

Now assume moreover  $\xi^{-1}=\chi_1|_{F^\times}= \chi_2|_{F^\times}$. Then $\delta_B^{-1/2}\chi^{-1}|_{T(F)}$ is a Jordan factor of $J_N(\pi_2)$ if and only if $\pi_2=\xi\circ\det$ or $\St_F\otimes\xi$. In the case $\pi_2=\St_F\otimes\xi$, 
 \begin{itemize}
     \item if $\EP_{F^\times\bs H(F)}(\pi,\BC) = 1$, then  $\Ext^1_{F^\times\bs H(F)}(\pi,\BC)=0$ since $\dim_\BC\Hom_{H(F)}(\pi,\BC) \leq 1$;
     \item if $\EP_{F^\times\bs H(F)}(\pi,\BC) = \dim_\BC\Hom_{E^\times}(\pi_2\otimes\chi^\prime, \BC) = 0$, then by Theorem \ref{SSNP} for $\St_F\in\Rep(F^\times\bs H(F))$, where $D(\St_F)=\BC$ and $d(\St_F)=1$,
     \begin{align*}
         \dim_\BC\Ext^1_{F^\times\bs H(F)}(\pi,\BC) &= \dim_\BC\Ext^1_{F^\times\bs H(F)}(I_{B(F)}^{H(F)}\chi\delta_B^{1/2}\otimes \xi,\St_F) \\
                                                    &= \dim_\BC \Hom_{F^\times\bs H(F)}(\BC,  I_{B(F)}^{\GL_2(F)}\delta_B^{1/2})=0.
     \end{align*}
 \end{itemize}

In the case  $\pi_2=\xi\circ\det$,
we shall give the criterion for $\Ext_{F^\times \bs H(F)}^1(\pi,\BC) = \BC$ via studying $\EP_{F^\times\bs H(F)}(\pi,\BC)$.
By the above (i), $\EP_{F^\times\bs H(F)}(\pi,\BC)= \dim_\BC \Hom_{E^\times}(\pi_2\otimes\chi^\prime, \BC) \leq 1$,   where the equality holds if and only if $\xi_E\chi^\prime=1$.  

If $\EP_{F^\times\bs H(F)}(\pi,\BC) = 1$, as the second term in the long exact sequence has multiplicity $\leq1$, we must have
$\Ext_{F^\times \bs H(F)}^1(\pi,\BC) = 0$. 

If $\EP_{F^\times\bs H(F)}(\pi,\BC) = 0$, then as the last term in the last exact sequence vanishes, we must have
\[\dim_\BC\Hom_{E^\times}(\pi_2\otimes\chi^\prime,\BC) = 0, \quad 
 \dim_\BC \Ext_{F^\times \bs H(F)}^1(\pi,\BC) = \dim_\BC \Hom_{H(F)}(I_{B(F)}^{H(F)} \chi\delta_B^{1/2},\pi_2^\vee).\]
 It is then easy to see that $\dim_\BC \Ext_{F^\times \bs H(F)}^1(\pi,\BC) = 1$ if and only if $\chi|_{F^\times} \xi = 1$
 and $\xi_E \chi' \not= 1$.

\end{proof}
 One can deduce Proposition \ref{Strongform} immediately from Proposition \ref{case II}. 
\begin{proof}[Proof of Proposition \ref{Strongform}]Denote the central character of $\pi_2$ by $\omega^{-1}$. By Theorem 
	\ref{SSNP},
\begin{itemize}
    \item when $\pi_2$ is supercuspidal, by Proposition \ref{Ext} (1),  $$\Hom_{H(F)}(\pi_2^\vee,\pi_1) \cong \Hom_{H(F)}(\pi_1,\pi^\vee_2);$$
    \item when $\pi_2$ is non-supercuspidal, $d(\pi_2)=1$ and 
    $$\dim_\BC \Hom_{H(F)}(\pi^\vee_2,\pi_1) = \dim_\BC \Ext^1_{\Rep(H(F),\omega)}(\pi_1,D(\pi^\vee_2)) = \dim_\BC \Ext^1_{F^\times\bs H(F)}(\pi_1\otimes D(\pi_2),\BC).$$
  \end{itemize}
Since $D(\xi\circ\det)=\St_F\otimes\xi$ and $D(\St_F\otimes\xi)=\xi\circ\det$, the statement follows from  Proposition \ref{case II} immediately.
   \end{proof}
 \begin{prop}\label{case III}Assume $D$ is split and  $L=F\oplus F \oplus F$. For $\pi=\pi_1\boxtimes\pi_2\boxtimes\pi_3\in\Rep(F^\times\bs G(F))$ with  $\pi_i\in\Rep(\GL_2(F))$ irreducible, $\dim_\BC\Ext^1_{F^\times\bs H(F)}(\pi,\BC)\leq 1$  with the equality holds if and only if up to reordering and twisting,
 \begin{itemize}
     \item $\pi_1=\St_F$ and $\pi_2=\pi_3=\BC$, or
     \item $\pi_1=\pi_2^\vee$ are principal series and $\pi_3=\BC$.
 \end{itemize} 
\end{prop}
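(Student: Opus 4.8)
The argument will follow the pattern of Propositions \ref{case I} and \ref{case II}. By Proposition \ref{Ext}(1) we have $\Ext^i_{F^\times\bs H(F)}(\pi,\BC)=0$ for $i\geq 2$, so only $\Ext^1$ is at issue. Writing $\omega_i$ for the central character of $\pi_i$ (so $\omega_1\omega_2\omega_3=1$), if some $\pi_i$ is supercuspidal then, splitting it off and applying Proposition \ref{Ext}(2) together with the injectivity of a supercuspidal object in $\Rep(\GL_2(F),\omega_i^{-1})$, one gets $\Ext^1_{F^\times\bs H(F)}(\pi,\BC)=\Ext^1_{\Rep(\GL_2(F),\omega_i^{-1})}(\pi_j\otimes\pi_k,\pi_i^\vee)=0$. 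Hence we may assume each $\pi_i$ is one-dimensional, a twist of $\St_F$, or an irreducible principal series. The key reduction is to apply Theorem \ref{SSNP} to $\St_F\in\Rep(F^\times\bs H(F))$, for which $d'(\St_F)=1$, $D(\St_F)=\BC$, and $\Ext^1_{F^\times\bs H(F)}(\St_F,\BC)\cong\BC$; since the relevant $\Ext$-groups are finite-dimensional by \cite{AS20}, the $i=0$ pairing is perfect and gives
\[\dim_\BC\Ext^1_{F^\times\bs H(F)}(\pi,\BC)=\dim_\BC\Hom_{H(F)}(\St_F,\pi_1\otimes\pi_2\otimes\pi_3).\]
So everything reduces to deciding when $\St_F$ occurs as a $\GL_2(F)$-subrepresentation of the diagonal restriction $(\pi_1\otimes\pi_2\otimes\pi_3)|_{\GL_2(F)}$, and to bounding that space by one dimension.

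\textbf{Computing the Steinberg period.} To evaluate $\Hom_{H(F)}(\St_F,\pi_1\otimes\pi_2\otimes\pi_3)$ I would mimic the proof of Proposition \ref{case II}: realize each special series as the subobject of the principal series $I_{B(F)}^{\GL_2(F)}\mu$ it sits in (with one-dimensional quotient), and realize $\St_F$ as a quotient of $I_{B(F)}^{\GL_2(F)}(\delta_B^{-1/2})$, so as to reduce to $\Hom_{\GL_2(F)}$ into and out of diagonal tensor products of full principal series. The Bernstein–Zelevinsky geometric lemma for the diagonal $\GL_2\hookrightarrow\GL_2\times\GL_2$, applied once to $\pi_i\otimes\pi_j$ and then again (equivalently the $\GL_2\hookrightarrow\GL_2^3$ version, cf. \cite[\S4--6]{Pra92}), yields a finite filtration of the restriction whose graded pieces are normalized inductions $I_{B(F)}^{\GL_2(F)}(-)$, compact inductions $i_{T(F)}^{\GL_2(F)}(-)$ from the split torus, and a piece coming from the part of each Kirillov model on which the unipotent acts through $\psi$; concretely, $J_N(\pi_i\otimes\pi_j)$ acquires, besides the semisimple part $J_N(\pi_i)\otimes J_N(\pi_j)\otimes\delta_B^{1/2}$, an extra infinite-length summand compactly induced from a coordinate subtorus. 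Feeding this into long exact sequences for $\Ext^\bullet_{\GL_2(F)}$ and evaluating the terms with Proposition \ref{Ext}(2,3), the split-torus vanishing $\Ext^{\geq 1}_{F^\times\bs T(F)}(\sigma|_{T},-)=0$ of Lemma \ref{EP-Wal}, and Theorem \ref{SSNP} applied to $\BC$ and to $\St_F$ over $T(F)$ and over $\GL_2(F)$, one checks that each Jordan–Hölder constituent contributes at most one dimension and that the contributions of the ``induced'' pieces cancel against those of the one-dimensional error terms unless a resonance among the inducing characters forces a surviving constituent; these resonances are exactly the two configurations in the statement, up to reordering the three factors and an admissible simultaneous twist. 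In the two surviving configurations the computation collapses — to $\Hom_{\GL_2(F)}(\St_F,\St_F)$ in the first case, and to a rank-one period of triple-product type in the second — confirming $\dim_\BC\Ext^1_{F^\times\bs H(F)}(\pi,\BC)=1$ there.

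\textbf{Main obstacle.} The hard part will be exactly this bookkeeping: controlling, through the iterated geometric lemma and the long exact sequences, the cross-terms in which two or three of the Kirillov models simultaneously contribute their $\psi$-part — the source of the infinite-length summands in $J_N(\pi_i\otimes\pi_j)$ and of the ``open-orbit'' pieces $i_{T(F)}^{\GL_2(F)}(-)$ — and showing that after intersecting with the conditions produced by resolving the special series, precisely the two stated resonances remain, each with multiplicity one, while every other configuration gives a complete cancellation. This is the $L=F\oplus F\oplus F$ analogue of the calculation underlying \cite[Proposition 9.1]{NP20}; the central-character constraint $\omega_1\omega_2\omega_3=1$ is what forces $\pi_3$ (after twisting) to be trivial in both surviving cases and, in the second, $\pi_1\cong\pi_2^\vee$.
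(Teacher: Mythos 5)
Your opening reduction is correct, and it is a genuinely different starting point from the paper's. Applying Theorem \ref{SSNP} with $\St_F$ in the first slot (so $d(\St_F)=1$, $D(\St_F)=\BC$), together with the finite-dimensionality from \cite{AS20}, does give
\[
\dim_\BC\Ext^1_{F^\times\bs H(F)}(\pi,\BC)=\dim_\BC\Hom_{H(F)}(\St_F,\pi_1\otimes\pi_2\otimes\pi_3),
\]
and your handling of the supercuspidal and the $i\geq 2$ cases is fine. The paper, by contrast, does not make this uniform reduction: it splits off $\pi_3$ via Proposition \ref{Ext}(2) to rewrite the $\Ext^1$ as $\Ext^1_{\Rep(H(F),\omega)}(\pi_1\otimes\pi_2,\pi_3^\vee)$ and then runs a case analysis on which factors are one-dimensional, special, or principal. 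Your identity is in fact the ``dual'' form, equivalent to the Corollary that the paper derives \emph{after} Proposition \ref{case III}; so your plan amounts to proving that branching statement first and reading off the $\Ext^1$, which is a legitimate reorganization.

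The gap is that the central computation is never carried out. Everything in your ``Computing the Steinberg period'' paragraph is a description of a strategy (filter through principal series, apply the Bernstein--Zelevinsky geometric lemma for the diagonal $\GL_2\hookrightarrow\GL_2\times\GL_2$ and its three-fold version, track the open-orbit pieces $i_{T(F)}^{\GL_2(F)}(-)$, feed into long exact sequences, show cancellations) rather than the execution of it, and you yourself flag this bookkeeping as ``the hard part.'' That bookkeeping \emph{is} the proof. In the paper it is done concretely: when $\pi_3$ is one-dimensional one reduces to $\Ext^1_{\Rep(H(F),\omega_1)}(\pi_1,\pi_2^\vee)$ and finishes with Theorem \ref{SSNP} applied to $\pi_2^\vee$; when $\pi_1,\pi_2$ are both special one uses Prasad's exact sequence $0\to\St_F\to i_{T(F)}^{\GL_2(F)}\BC\to\St_F\otimes\St_F\to0$ from \cite[Lemma 5.4]{Pra90} together with Lemma \ref{EP-Wal} and \cite[Theorem 1.2]{Pra90}; when $\pi_1,\pi_2$ are both principal one uses the geometric lemma sequence $0\to i_{T(F)}^{\GL_2(F)}(\xi_1\xi_2^w)\to\pi_1\otimes\pi_2\to I_{B(F)}^{\GL_2(F)}(\xi_1\xi_2\delta_B^{1/2})\to0$, again with Lemma \ref{EP-Wal} and Theorem \ref{SSNP}; the remaining mixed cases follow by reordering. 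None of these explicit sequences, vanishing statements, or multiplicity-one inputs (Saito--Tunnell over the split torus, Prasad's trilinear-form results) appear in your sketch, and without them it is not established that the only surviving configurations are the two in the statement. As it stands, your proposal is a plausible plan with a correct initial reduction but not a proof.
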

\begin{proof} Denote the central character of $\pi_3$ by $\omega^{-1}$. Then by Proposition \ref{Ext} (i) (ii)$$\Ext^1_{F^\times \bs H(F)}(\pi,\BC)\cong \Ext^1_{\Rep(H(F),\omega)}(\pi_1\otimes\pi_2,\pi_3^\vee)$$
and if  $\pi_3$ is supercuspidal,  $\Ext^1_{F^\times \bs H(F)}(\pi,\BC)=0$. In the following, we assume none of $\pi_i$ is supercuspidal.

If $\pi_3$ is one dimensional, which we may assume to be $\BC$ by twisting,  then
$$\Ext^1_{F^\times\bs H(F)}(\pi,\BC)\cong \Ext^1_{\Rep(H(F),\omega_1)}(\pi_1,\pi_2^\vee)$$
where $\omega_1$ is the central character of $\pi_1$.  Then by Theorem \ref{SSNP} for $\pi_2^\vee$, $$\dim_\BC\Ext^1_{F^\times\bs H(F)}(\pi,\BC)\leq 1$$
with the equality holds if and only if up to reordering and twisting,
\begin{itemize}
    \item $\pi_1=\St_F$ and $\pi_2=\BC$,
    \item $\pi_1=\pi_2^\vee$ are principal series.
\end{itemize}

If $\pi_1$, $\pi_2$ are special series, which we assume to be  $\St_F$ by twisting, and  $\pi_3$ is generic, then  by \cite[Lemma 5.4]{Pra90}, there exists an exact sequence of $\GL_2(F)$-representations
$$0\to\St_F\to i_{T(F)}^{\GL_2(F)}\BC\to \pi_1\otimes\pi_2\to0,$$ which induces the long exact sequence
\begin{align*}
    0\to& \Hom_{F^\times\bs H(F)}(\pi_1 \otimes \pi_2,\pi_3^\vee)\to
    \Hom_{F^\times\bs H(F)}(i_{ T(F)}^{H(F)}\BC,\pi_3^\vee)\to\Hom_{F^\times\bs H(F)}(\St_F,\pi^\vee_3)\to \\
   & \Ext^1_{F^\times\bs H(F)}(\pi_1 \otimes \pi_2,\pi_3^\vee)\to
    \Ext^1_{F^\times\bs H(F)}(i_{ T(F)}^{H(F)}\BC,\pi_3^\vee)\to
    \Ext^1_{F^\times\bs H(F))}(\St_F,\pi^\vee_3)\to0.
\end{align*}
By Lemma \ref{EP-Wal}, $$\Ext^1_{F^\times\bs H(F)}(i_{ T(F)}^{H(F)}\BC,\pi_3^\vee)=0,\quad \Hom_{F^\times \bs H(F)}(i_{ T(F)}^{H(F)}\BC,\pi_3^\vee)=\BC.$$
 Thus
\begin{itemize}
    \item when $\pi_3=\St_F$, $\Hom_{H(F)}(\pi,\BC)=0$ by \cite[Theorem 1.2]{Pra90} and hence $\Ext^1_{F^\times \bs H(F)}(\pi,\BC)=0$,
    \item when $\pi_3\neq\St_F$, $\Ext^1_{F^\times\bs H(F)}(\pi|_H,\BC)=\Hom_{F^\times\bs H(F)}(\St_F,\pi^\vee_3)=0$.
\end{itemize}

If $\pi_1=I_{B(F)}^{\GL_2(F)}\xi_1$, $\pi_2=I_{B(F)}^{\GL_2(F)}\xi_2$ and $\pi_3$ is generic. 
Then by the geometric lemma (see \cite[Section 5]{Pra90}), there is an exact sequence of $\GL_2(F)$-representations
$$0\to i_{T(F)}^{\GL_2(F)}(\xi_1\xi_2^w)\to \pi_1\otimes\pi_2\to I_{B(F)}^{\GL_2(F)}(\xi_1\xi_2\delta^{1/2})\to0,$$
which leads to  a long exact sequence
\begin{align*}
    0\to& \Hom_{H(F)}(I_{B(F)}^{\GL_2(F)}(\xi_1\xi_2\delta_B^{1/2}),\pi_3^\vee)\to\Hom_{H(F)}(\pi_1\otimes\pi_2,\pi_3^\vee)\to\\ &\Hom_{H(F)}(i_{T(F)}^{\GL_2(F)}(\xi_1\xi_2^w),\pi_3^\vee)\to
     \Ext^1_{\Rep(H(F),\omega)}(I_{B(F)}^{\GL_2(F)}(\xi_1\xi_2\delta_B^{1/2}),\pi_3^\vee)\to\\
     &\Ext^1_{\Rep(H(F),\omega)}(\pi_1\otimes\pi_2,\pi_3^\vee)\to \Ext^1_{\Rep(H(F),\omega)}(i_{T(F)}^{\GL_2(F)}(\xi_1\xi_2^w),\pi_3^\vee)\to0.
\end{align*}
By Proposition \ref{Ext} (2)(3),
$$\Ext^i_{\Rep(H(F),\omega)}(i_{T(F)}^{\GL_2(F)}(\xi_1\xi_2^w),\pi_3^\vee)=\Ext^i_{F^\times\bs T(F)}(\pi_3\otimes\xi_1\xi_2^w,\BC).$$
Moreover by Theorem \ref{SSNP} for $\pi_3^\vee\in\Rep(F^\times\bs H(F),\omega)$, where $d(\pi_3^\vee)=1$,
$$\dim_\BC\Ext^1_{\Rep(H(F),\omega)}(I_{B(F)}^{\GL_2(F)}(\xi_1\xi_2\delta_B^{1/2}),\pi_3^\vee)=\dim_\BC\Hom_{H(F)}(I_{B(F)}^{\GL_2(F)}(\xi_1\xi_2\delta_B^{1/2}),\pi_3^\vee).$$
Then by  Lemma \ref{EP-Wal}, one has
\begin{align*}\dim_\BC\Ext^1_{\Rep(H(F),\omega)}(\pi_1\otimes\pi_2,\pi_3^\vee)& = 
\dim_\BC\Hom_{F^\times\bs H(F)}(\pi,\BC)-\dim_\BC\Hom_{F^\times\bs  T(F)}(\pi_3\otimes\xi_1\xi_2^w,\BC)\\
&=\dim_\BC\Hom_{F^\times\bs H(F)}(\pi,\BC)-1\geq0.
\end{align*}
Since $\dim_\BC\Hom_{H(F)}(\pi|_H,\BC)\leq1$, one has $\Ext^1_{F^\times\bs H(F)}(\pi,\BC) = \Ext^1_{\Rep(H(F),\omega)}(\pi_1\otimes\pi_2,\pi_3^\vee) = 0.$

Interchanging the roles of $\pi_i$, the statement follows.
\end{proof}
Immediately, we deduce the following corollary,   which slightly generalizes \cite[Proposition 9.1]{NP20} (by dropping the central character condition).
\begin{cor}Let $\pi=\boxtimes_{i=1}^3\pi_i\in\Rep(F^\times\bs G(F))$ be an irreducible smooth representation. Then $\pi_3^\vee$ is a $\GL_2(F)$-subrepresentation of $\pi_1\otimes\pi_2$ if and only if 
\begin{itemize}
\item $\pi_3^\vee$ is supercuspidal, and appears as a quotient of $\pi_1\otimes\pi_2$;
\item $\pi_3^\vee=\St_F\otimes\xi$ and $\pi_1=\pi_2^\vee\otimes\xi$ is a principal series;
\item $\pi_1$ or $\pi_2$ is one-dimensional and $\pi_3^\vee\cong \pi_1\otimes\pi_2$.
\end{itemize}
\end{cor}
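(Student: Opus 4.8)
The plan is to deduce the corollary from Proposition \ref{case III} and the Schneider--Stuhler duality of Theorem \ref{SSNP}, proceeding exactly as Proposition \ref{Strongform} was deduced from Proposition \ref{case II}. Write $\omega$ for the central character of $\pi_1\otimes\pi_2$, regarded as a $\GL_2(F)$-representation via the diagonal embedding $H\hookrightarrow G$. Since $\pi=\boxtimes_{i=1}^{3}\pi_i\in\Rep(F^\times\bs G(F))$ forces $\omega_{\pi_1}\omega_{\pi_2}\omega_{\pi_3}=\mathbf 1$, we have $\omega=\omega_{\pi_3}^{-1}$, so $\pi_3^\vee$ and $\pi_1\otimes\pi_2$ both lie in $\Rep(H(F),\omega)$, and the assertion that $\pi_3^\vee$ is a $\GL_2(F)$-subrepresentation of $\pi_1\otimes\pi_2$ means exactly $\Hom_{\Rep(H(F),\omega)}(\pi_3^\vee,\pi_1\otimes\pi_2)\neq0$. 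First I would handle $\pi_3$ supercuspidal: then by Proposition \ref{Ext}(1) the irreducible $\pi_3^\vee$ is both projective and injective in $\Rep(H(F),\omega)$, so it is a subrepresentation of $\pi_1\otimes\pi_2$ if and only if it is a direct summand, equivalently a quotient --- this is the first bullet.

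Now suppose $\pi_3$ is not supercuspidal, so its cuspidal support lies on the diagonal torus and $d'(\pi_3^\vee)=1$. The second part of Theorem \ref{SSNP}, applied to the irreducible $\pi_3^\vee\in\Rep(H(F),\omega)$, then gives a non-degenerate pairing yielding
\[\dim_\BC\Hom_{\Rep(H(F),\omega)}(\pi_3^\vee,\pi_1\otimes\pi_2)=\dim_\BC\Ext^1_{\Rep(H(F),\omega)}(\pi_1\otimes\pi_2,D(\pi_3^\vee)).\]
Next I would rewrite the right-hand side using Proposition \ref{Ext}(2) together with the compatibility $D(\pi_3^\vee)^\vee\cong D(\pi_3)$ of the Aubert--Zelevinsky involution with the contragredient, obtaining $\dim_\BC\Ext^1_{F^\times\bs H(F)}(\pi_1\otimes\pi_2\otimes D(\pi_3),\BC)$. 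Because $\omega_{\pi_1}\omega_{\pi_2}\omega_{D(\pi_3)}=\mathbf 1$, Proposition \ref{case III} applies to the triple $(\pi_1,\pi_2,D(\pi_3))$ of irreducible representations and shows this dimension is $\leq1$, with equality precisely when, up to reordering and twisting, either one member of the triple is a twist of $\St_F$ and the other two are one-dimensional, or two members are mutually contragredient principal series and the third is one-dimensional.

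The final step is to transport this dichotomy back through $D$, using $D(\xi\circ\det)=\St_F\otimes\xi$, $D(\St_F\otimes\xi)=\xi\circ\det$ and $D(\tau)\cong\tau$ for an irreducible principal series $\tau$; the non-supercuspidal $\pi_3$ with $\Hom\neq0$ then fall exactly into the cases ``$\pi_3^\vee=\St_F\otimes\xi$ with $\pi_1\cong\pi_2^\vee\otimes\xi$ a principal series'' and ``$\pi_1$ or $\pi_2$ is one-dimensional and $\pi_3^\vee\cong\pi_1\otimes\pi_2$'', i.e.\ the two remaining bullets. I expect the only genuinely delicate part to be this last bookkeeping: one must carry the various unramified and $\det$-twists through the correspondence $D$ while respecting $\omega_{\pi_1}\omega_{\pi_2}\omega_{\pi_3}=\mathbf 1$, both in order to decide which configuration of Proposition \ref{case III} can occur for a given $\pi_3$ and in order to check that in the last bullet $\pi_3^\vee$ is genuinely isomorphic to $\pi_1\otimes\pi_2$ rather than merely embedded in it. Granting Propositions \ref{Ext} and \ref{case III} and Theorem \ref{SSNP}, this is routine.
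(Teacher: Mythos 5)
Your proposal is correct and follows essentially the same route as the paper: treat supercuspidal $\pi_3$ via projectivity/injectivity in $\Rep(H(F),\omega)$ (so sub $\Leftrightarrow$ quotient), and for non-supercuspidal $\pi_3$ use Theorem \ref{SSNP} plus Proposition \ref{Ext}(2) to identify $\Hom_{H(F)}(\pi_3^\vee,\pi_1\otimes\pi_2)$ with $\Ext^1_{F^\times\bs H(F)}(\pi_1\otimes\pi_2\otimes D(\pi_3),\BC)$, then invoke Proposition \ref{case III} and translate back through $D(\xi\circ\det)=\St_F\otimes\xi$, $D(\St_F\otimes\xi)=\xi\circ\det$. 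Your write-up just makes explicit the central-character bookkeeping and the compatibility $D(\pi_3^\vee)^\vee\cong D(\pi_3)$ that the paper leaves implicit.
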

\begin{proof}Denote the central character of $\pi_3$ by $\omega^{-1}$. By Theorem 
	\ref{SSNP} and Proposition \ref{Ext} (2),
\begin{itemize}
    \item when $\pi_3$ is supercuspidal, $$\Hom_{H(F)}(\pi_3^\vee,\pi_1\otimes\pi_2)=\Hom_{H(F)}(\pi_1\otimes\pi_2,\pi^\vee_2),$$
    \item when $\pi_3$ is non-supercuspidal,
    $$\Hom_{H(F)}(\pi^\vee_3,\pi_1\otimes\pi_2)=\Ext^1_{F^\times\bs H(F)}(\pi_1\otimes\pi_2\otimes D(\pi_3),\BC).$$
  \end{itemize}
Then the statement follows immediately from Proposition \ref{case III}.
\end{proof}

\begin{proof}[Proof of Theorem \ref{main}]
	
The vanishing of higher Ext-groups for generic representations is proved in Propositions \ref{case I}, \ref{case II}, \ref{case III}.
In particular, for  $\pi \in \Rep(F^\times \bs G(F))$ tempered, 
\[m(\pi) = \EP_{F^\times \bs H(F)}(\pi,\BC).\]
Hence, by the multiplicity formula for tempered representations (Theorem \ref{wan}),
\[\EP_{F^\times \bs H(F)}(\pi,\BC) = m(\pi) = m_\mathrm{geo}(\pi).\]

For any Levi subgroup $M$ of $G$, denote by $\mathrm{Tmp}(F^\times \bs M(F))$ the set of tempered representations
on $F^\times \bs M(F)$ and $\wh{F^\times \bs M(F)}^{\mathrm{un}}$ the set of unramified characters on
$F^\times \bs M(F)$. Consider the subcategory $\Rep^0(F^\times \bs G(F))$ of $\Rep(F^\times \bs G(F))$
consisting of
$I_P^G (\sigma \otimes \chi)$
where $P = MN$ is a parabolic subgroup of $G$, $\sigma \in \mathrm{Tmp}(F^\times \bs M(F))$ and
$\chi \in \wh{F^\times \bs M(F)}^{\mathrm{un}}$. We have the following two lemmas.
\begin{lem}
 The Grothendieck group of the abelian sub-category of finite length representations in
		$\Rep(F^\times \bs G(F))$ can be generated by $\Rep^0(F^\times \bs G(F))$.
\end{lem}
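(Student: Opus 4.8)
The plan is to deduce this purely from the Langlands classification together with the unitriangularity of the passage from standard modules to irreducibles. First I would recall the classification: every irreducible $\pi\in\Rep(G(F))$ is the unique irreducible quotient of a \emph{standard module} $I_P^G(\tau\otimes\nu)$, where $P=MN$ is a parabolic subgroup of $G$, $\tau$ an irreducible tempered representation of $M(F)$, and $\nu$ a positive real-valued unramified character of $M(F)$, the triple $(M,\tau,\nu)$ being unique up to conjugacy. If $F^\times=Z_H(F)$ acts trivially on $\pi$, then since $\nu|_{F^\times}$ is real-valued it is forced to be trivial, hence $F^\times$ acts trivially on $\tau$ as well and $I_P^G(\tau\otimes\nu)$ lies in $\Rep^0(F^\times\bs G(F))$ --- here one uses that a positive real unramified character is in particular an unramified character, so $\tau\otimes\nu$ qualifies as one of the $\sigma\otimes\chi$ allowed in the definition of $\Rep^0$. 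I would also note that such standard modules have finite length, since tempered representations do and normalized parabolic induction of a finite length representation is of finite length, so they genuinely define classes in the Grothendieck group $R$ of the abelian category of finite length objects of $\Rep(F^\times\bs G(F))$.

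Second, I would invoke the standard fact that, for each irreducible $\pi$ with standard module $I_P^G(\tau\otimes\nu)$ as above, one has in $R$
\[ [I_P^G(\tau\otimes\nu)] \;=\; [\pi] \;+\; \sum_{j} m_j\,[\pi_j], \qquad m_j\in\BZ_{\geq 0}, \]
with each $\pi_j$ irreducible, of the same cuspidal support as $\pi$, and of Langlands parameter strictly smaller than that of $\pi$ in the usual partial order on exponents. For a fixed cuspidal support the set of irreducible constituents that occur is finite, so this partial order is well-founded; hence, by downward induction along it --- the base of the induction being those $\pi$ that already coincide with their own standard module, in particular all tempered ones --- every class $[\pi]$ is a $\BZ$-linear combination of classes of standard modules.

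Finally I would assemble the two ingredients: $R$ is generated as an abelian group by the classes of its irreducible objects; each of these is, by the second step, a $\BZ$-linear combination of classes of standard modules; and every standard module belongs to $\Rep^0(F^\times\bs G(F))$ by the first step. Hence $R$ is generated by the classes of the objects of $\Rep^0(F^\times\bs G(F))$, which is the assertion. I do not expect any genuine obstacle here, as this is the standard consequence of the Langlands classification (see e.g.\ \cite{Cas95}); in the case at hand $G$ is, in each of the three cases for $L$, a product of copies of $\GL_2$ and of unit groups of quaternion division algebras over finite extensions of $F$, so one could alternatively verify the claim directly from the explicit list of irreducibles. The only point deserving a word of care (and it is not a real obstacle) is that the Langlands data of a representation with trivial $F^\times$-central character again carries trivial $F^\times$-central character, which is immediate as noted above. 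Combined with the additivity of $\EP_{F^\times\bs H(F)}(\cdot,\BC)$ and of $m_{\mathrm{geo}}$ and their constancy along unramified twists, this lemma is precisely what reduces the general multiplicity formula to the tempered case, Theorem \ref{wan}.
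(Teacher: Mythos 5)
Your argument is correct and matches the paper's in spirit: both rest on the Langlands classification to express each irreducible class as the class of a standard module (which lies in $\Rep^0(F^\times\bs G(F))$, after the central-character check you carry out) minus classes of irreducibles with strictly smaller exponents, then conclude by well-founded induction. The paper phrases this more tersely by appealing directly to the explicit classification for $\GL_2(F)$ and $D^\times(F)$, where the only irreducibles outside $\Rep^0$ are the one-dimensional representations and the induction collapses to a single step, since their standard module has exactly one further constituent (a twist of Steinberg) which is already tempered and hence in $\Rep^0$.
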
	
\begin{proof}
For any  character $\omega:\ F^\times\to\BC^\times$,  
  set $\Rep^0(D^\times(F),\omega):=\Rep(D^\times(F),\omega)\cap \Rep^0(D^\times(F))$. Then  by the classification of irreducible $D^\times(F)$-representations,
   for any irreducible $\pi \in \Rep(D^\times(F),\omega)$, there exists $\pi_0 \in \Rep^0(D^\times(F),\omega)$ 
   such that the semisimplification of $\pi_0$ is the direct sum of $\pi$ and another irreducible
   representation in $\Rep^0(D^\times,\omega)$.
   The lemma follows from this fact.
\end{proof}

\begin{lem}\label{lem-constancy}
 Fix $\sigma \in \mathrm{Tmp}(F^\times \bs M(F))$ for some Levi $M$ of $G$ and consider
		the unramified twisting family $\pi_\chi = I_P^G \sigma \otimes \chi \in \Rep^0(F^\times \bs G(F))$ with 
		$\chi \in \wh{F^\times \bs M(F)}^{\mathrm{un}}$. For any $T \in \CT$, the function
		\[\chi \mapsto c_{\pi_\chi}|_T\]
		is constant. In particular, the geometric multiplicity is constant for an unramified
		twisting family in the sense that the function
		\[\chi \mapsto m_\mathrm{geo}(\pi_\chi)\]
		is constant.
\end{lem}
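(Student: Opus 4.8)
The plan is to prove the pointwise statement directly: for each $T\in\CT$ and each $t\in T(F)$ in general position, $c_{\pi_\chi}(t)$ does not depend on $\chi$, and then to read off the constancy of $m_\mathrm{geo}(\pi_\chi)$ from its definition. First I would record two facts. (a) Since $\chi$ is \emph{unramified} it is trivial on a neighbourhood of $1$ in $M(F)$, so near any semisimple $m\in M(F)$ one has $\Theta_{\sigma\otimes\chi}(m\exp Y)=\chi(m)\,\Theta_{\sigma}(m\exp Y)$ for small $Y$; comparing Harish-Chandra--Howe local character expansions yields $c_{\sigma\otimes\chi}(m)=\chi(m)\,c_{\sigma}(m)$ (and likewise coefficient by coefficient). (b) By \cite[Proposition 2.7]{Wan21} (the analogue for the regularized character of van Dijk's induced-character formula), for $t$ in general position $c_{I_P^G\tau}(t)$ is a fixed $\BC$-linear combination, with coefficients assembled from Weyl discriminants and independent of $\tau$, of the values $c_{\tau}(w^{-1}tw)$ over those $w$ with $w^{-1}tw\in M(F)$. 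Combining (a) and (b), $c_{\pi_\chi}(t)=\sum_{w}a_w(t)\,\chi(w^{-1}tw)\,c_{\sigma}(w^{-1}tw)$ with the $a_w(t)$ independent of $\chi$; hence it suffices to show $\chi(w^{-1}tw)=1$ for every such $w$, every $t\in T(F)$ and every $T\in\CT$.

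To prove this I would exploit that $\chi$ is unramified and trivial on $Z_H(F)=F^\times=\Gm(F)$. Writing $\nu_M\colon M(F)\to\mathfrak a_M$, $m\mapsto(\psi\mapsto\log|\psi(m)|_F)$, an unramified character of $M(F)$ factors through $\nu_M$, and since $Z_H\subset Z_G\subset M$ is split, triviality of $\chi$ on $Z_H(F)$ is equivalent to the vanishing of $\chi$ on the line $\mathfrak a_{Z_H}\subset\mathfrak a_M$. Fix $T\in\CT$, $t\in T(F)$ and $w$ with $t':=w^{-1}tw\in M(F)$; then $t'\in w^{-1}Tw$ and $\nu_M(t')$ is the image of $\Ad(w^{-1})\nu_T(t)\in\mathfrak a_{w^{-1}Tw}$ under the canonical projection $\mathfrak a_{w^{-1}Tw}\to\mathfrak a_M$. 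If $T$ is one of the nonsplit maximal tori of $H=\GL_2$, its maximal split subtorus is $Z_H$, so $\mathfrak a_T=\mathfrak a_{Z_H}$ and $\nu_T(t)\in\mathfrak a_{Z_H}$; since $Z_H$ is central in $G$ it is fixed by $w$ and the projection sends $\mathfrak a_{Z_H}$ into $\mathfrak a_{Z_H}$, so $\nu_M(t')\in\mathfrak a_{Z_H}$ and $\chi(t')=1$. For the remaining member $F^\times=Z_H$ of $\CT$, on which $F^\times\bs T(F)$ is a single point and $t$ may be taken to be $1$, the assertion is immediate. (When $D$ is non-split, the whole of $\CT$ was already treated, $\BC$ being injective.) Therefore $c_{\pi_\chi}|_T=c_{\pi_1}|_T$ for every $T\in\CT$, where $\pi_1$ denotes $\pi_\chi$ at the trivial character.

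Feeding this into
\[m_\mathrm{geo}(\pi_\chi)=\sum_{T\in\CT}|W(H,T)|^{-1}\int_{F^\times\bs T(F)}c_{\pi_\chi}(t)\,D^H(t)\,dt,\]
where $\CT$, $W(H,T)$, $D^H$ and $dt$ are independent of $\chi$ and the (regularized) integral depends only on its integrand, gives $m_\mathrm{geo}(\pi_\chi)=m_\mathrm{geo}(\pi_1)$; thus $\chi\mapsto m_\mathrm{geo}(\pi_\chi)$ is constant.

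\textbf{Main obstacle.} The genuinely delicate step is (b) together with the bookkeeping of which $G(F)$-conjugates of an element of a torus in $\CT$ actually lie in the Levi $M$: this needs a precise form of \cite[Proposition 2.7]{Wan21} and uses the structure of the triple-product case — for $D$ split the members of $\CT$ are the nonsplit maximal tori of $\GL_2$, whose maximal split subtorus is $Z_H$, together with $Z_H$ itself, which is precisely what makes $\chi(w^{-1}tw)=1$ hold. One must also remember that the integral defining $m_\mathrm{geo}$ is a regularized integral, so that the passage from ``constant integrand'' to ``constant value'' is applied in the right setting; given the pointwise statement this is routine.
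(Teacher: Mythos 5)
Your overall route is the paper's: apply Wan's induced-character formula for regularized characters, use $c_{\sigma\otimes\chi}=\chi\cdot c_\sigma$, determine for which $T\in\CT$ and which Levi $M$ the set $\fX_M(t)$ is nonempty, and reduce everything to showing $\chi(y)=1$ for the finitely many $G(F)$-conjugates $y=w^{-1}tw\in M(F)$. The gap is in exactly that last reduction. You assert that, for an unramified character $\chi$ of $M(F)$, triviality on $Z_H(F)=F^\times$ is ``equivalent to the vanishing of $\chi$ on the line $\mathfrak{a}_{Z_H}\subset\mathfrak{a}_M$'', and you then conclude $\chi(y)=1$ from $\nu_M(y)\in\mathfrak{a}_{Z_H}$. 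This is false for general unramified characters: triviality on $F^\times$ only forces $\chi$ to kill the \emph{lattice} $\nu_M(F^\times)$ inside that line, while $\nu_M(y)$ can lie in a strictly finer lattice on the same line. Concretely, take $L=E\oplus F$ with $E/F$ \emph{ramified}, $M=A_E\times \GL_2$, and $\chi$ trivial on $A_E(F)$ and equal to $\eta\circ\det$ on $\GL_2(F)$, where $\eta$ is the unramified quadratic character of $F^\times$. Then $\chi$ is unramified on $M(F)$ and trivial on the diagonal $F^\times$ (since $\eta(a^2)=1$), yet for $t=\varpi_E\in T(F)=E^\times$ and $y=(\mathrm{diag}(t,\bar t),t)$ one has $\chi(y)=\eta(N_{E/F}(t))=-1$. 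For this $\chi$ one gets $c_{\pi_\chi}|_{E^\times}=(\eta\circ N_{E/F})\cdot c_{\pi_1}|_{E^\times}$, which is genuinely non-constant, and $m_{\mathrm{geo}}$ genuinely changes — as it must, since twisting $\pi_2$ by $\eta$ can flip the Saito--Tunnell epsilon sign and change the actual multiplicity. So, read with your (weaker) interpretation of $\wh{F^\times\bs M(F)}^{\mathrm{un}}$, the statement you are proving is false, and your argument cannot be repaired without strengthening the hypothesis on $\chi$.

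The paper's proof avoids this by using the stronger, intended meaning of $\wh{F^\times\bs M(F)}^{\mathrm{un}}$: characters built from rational characters of the quotient $F^\times\bs M$, i.e.\ trivial on $\left(F^\times\bs M\right)^0=\bigcap_{\mu\in\mathrm{Rat}(F^\times\bs M)}\ker|\mu|$. Then the needed input is not ``$\nu_M(y)$ lies in the line'' but the stronger statement that the conjugates $y$ lie in $\left(F^\times\bs M\right)^0$, and this is what anisotropy buys: since $F^\times\bs T$ is anisotropic, $\mathrm{Rat}(F^\times\bs T)=0$, so every $\mu\in\mathrm{Rat}(F^\times\bs M)$ restricts trivially to the (conjugated) copy of $T$ inside $M$, whence $|\mu(y)|=1$ for all such $\mu$ and therefore $\chi(y)=1$. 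Note also that this stronger notion is the right one for the application: the twists needed to reach all finite-length representations (Langlands-type, positive-valued $\chi$) are automatically trivial on $\left(F^\times\bs M\right)^0$, whereas torsion unramified twists such as the $\eta\circ\det$ above are excluded — and must be, by the counterexample. Your facts (a) and (b), and your case analysis of which $T\in\CT$ meet which Levi, are fine and agree with the paper; the missing ingredient is precisely this anisotropy/rational-character argument (or, equivalently, an explicit restriction to positive real unramified twists).
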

\begin{proof}
	We have the following fact on the regularized character of a parabolic induction \cite[Proposition 2.7]{Wan21}.
	Let $\pi = I_P^G \sigma$ be a finite length  $G(F)$-representation induced from $P = MN$. 
	Then for any semi-simple $x \in G(F)$
	\[D^G(x)^{1/2} c_\pi(x) = \sum_{y \in \fX_M(x)} D^M(y)^{1/2} c_\sigma(y)\]
	where $\fX_M(x)$ is the set of representatives for $M(F)$-conjugacy classes
	of elements in $M(F)$ that are $G(F)$-conjugated to $x$.

	Let $T \in \CT$. If $\fX_M(t)$ is empty for any $t \in T(F)$, $c_{\pi_\chi}|_T = 0$ for 
	any $\chi$. Assume
        there exists $t \in T(F)$ such that $\fX_M(t)$ is nonempty, or equivalently, there is
	an embedding of $T$ into $M$. This happens exactly when
	\begin{itemize}
		\item $M = G$;
		\item $L = E \oplus F$, $T =  E^\times$  
		and $M = A_E \times D^\times$ where $A_E$ is the diagonal torus
	of $\GL_{2,E}$. In this case, 
	\[\fX_M(t) = \left\{ \left(\matrixx{t}{0}{0}{\bar{t}},t\right), \quad
	\left(\matrixx{\bar{t}}{0}{0}{t},t\right)\right\}.\]
	\end{itemize}
	To show the constancy of $c_{\pi_\chi}|_T$ for any unramified character $\chi$ on $M(F)$, 
	it is enough to prove that
	\[F^\times \bs T(F) \subset \left(F^\times \bs M\right)^0 = \bigcap_{\mu \in \mathrm{Rat}\left(F^\times \bs M\right)} \ker |\mu|\]
	where $\mathrm{Rat}\left(F^\times \bs M\right)$ is the group of rational characters on $M$. As $F^\times \bs T$ is anisotropic,
	$\mathrm{Rat}(F^\times \bs T) = 0$ so that for any $\mu \in \mathrm{Rat}\left( F^\times \bs M \right)$, $\mu(t) = 1$ for any $t \in T$.
\end{proof}

Now, as the both sides of the multiplicity formula is additive, by (1), we only need to consider representations
in $\Rep^0(F^\times \bs G(F))$, that is, unramified twists of tempered representations. 
By (2), the geometric multiplicity is constant for an unramified twisting family. Meanwhile,
it is known that the Euler-Poincar\'e number 
is constant for an unramified twisting family (see \cite[Theorem E(4)]{AS20} or \cite[Proposition 3.18]{CF21}
for a more general situation). Therefore, the multiplicity formula for tempered representations 
implies the formula for any irreducible representation.

\end{proof}

\begin{remark}
	In fact, the constancy of the geometric multiplicity holds in general. It can be proved
	similarly as the above special case Lemma \ref{lem-constancy} (Note that in general,
	any torus $T$ in the support $\CT(G,H)$ satisfies $T/Z_{G,H}$ is anisotropic. 
	See \cite[Definition 4.3]{Wan21}.)
\end{remark}

	\end{document}